\newcommand{\Fcal}{\mathcal{F}}
\newcommand{\Gcal}{\mathcal{G}}
\newcommand{\Hcal}{\mathcal{H}}
\newcommand{\Rcal}{\mathcal{R}}
\newcommand{\ch}{\mathbf{1}}
\newcommand{\R}{\mathbb{R}}
\newcommand{\N}{\mathbb{N}}
\newcommand{\T}{\mathbb{T}}
\newcommand{\im}{\mathbf{\mathfrak{m}}}
\newcommand{\al}{\alpha}
\newcommand{\Ga}{\Gamma}
\newcommand{\ga}{\gamma}
\newcommand{\del}{\delta}
\newcommand{\Del}{\Delta}
\newcommand{\ep}{\epsilon}
\newcommand{\la}{\lambda}
\newcommand{\La}{\Lambda}
\newcommand{\br}{\vspace{3 mm}}
\newcommand{\imp}{\Rightarrow}
\newcommand{\id}{{\rm{id}}}
\newcommand{\Id}{{\rm{Id}}}
\newcommand{\diam}{{\rm{diam\,}}}
\newcommand{\supp}{{\rm{supp\,}}}
\newtheorem{thm}{Theorem}[section]
\newtheorem{lem}[thm]{Lemma}
\newtheorem{prop}[thm]{Proposition}
\theoremstyle{definition}
\newtheorem{defn}[thm]{Definition}
\newtheorem{rmk}[thm]{Remark}
\newtheorem{rems}[thm]{Remarks}
\theoremstyle{remark}
\newtheorem{que}[thm]{Question}
\numberwithin{equation}{section}
\def \N {\mathbb N}
\def \R {\mathbb R}
\def \sq {sequence}
\def \diam {\mathsf{diam}}
\def \cob {\mathsf{Cob}}
\def \tl {topological}
\def \im {invariant measure}
\def \ds {dynamical system}
\def \id {\mathsf{Id}}
\def \xt {$(X,T)$}
\def \xmt {$(X,\mu,T)$}
\def \zs {$(Z,S)$}
\def \zns {$(Z,\nu,S)$}
\def \Id {\mathsf{Id}}
\begin{document}
\title{Isomorphic extensions and applications}
\author{Tomasz Downarowicz and Eli Glasner}

\address{Institute of Mathematics\\
     Polish Academy of Science\\
         \'Sniadeckich 8, 00-656 Warsaw\\
         Poland}
\email{downar@pwr.edu.pl}

\address{Department of Mathematics\\
     Tel Aviv University\\
         Tel Aviv\\
         Israel}
\email{glasner@math.tau.ac.il}

\date{February 6, 2015}

\begin{abstract}
If $\pi:(X,T)\to(Z,S)$ is a \tl\ factor map between uniquely ergodic \tl\ \ds s, then \xt\ is called an isomorphic extension of \zs\ if $\pi$ is also a measure-theoretic isomorphism.
We consider the case when the systems are minimal and we pay special attention to equicontinuous \zs.
We first establish a characterization of this type of isomorphic extensions in terms of mean equicontinuity,
and then show that an isomorphic extension need not be almost one-to-one,
answering questions of Li, Tu and Ye.
\end{abstract}

 \subjclass[2010]{37xx, 37A05, 37B05}

 \keywords{minimality, unique ergodicity, isomorphic extension, almost one-to-one extension,
 mean equicontinuity, skew product}

\thanks{The research of the first named author is supported by the NCN (National Science Center, Poland) grant 2013/08/A/ST1/00275. The research of the second named author was supported by a grant of ISF 668/13}

\maketitle

%
\section{Introduction}
Throughout, by a \tl\ \ds\ (denoted \xt, or similarly) we will always mean the action of a homeomorphism 
$T$ on an infinite compact metric space $X$. Although many of our results apply to the noninvertible case, 
for simplicity, we focus on invertible systems only.
\smallskip

By a \emph{\tl\ model} of an (invertible) ergodic measure-preserving transformation (\emph{m.p.t.} for short) $(\Omega, \Fcal, \nu, S)$, we will mean any uniquely ergodic \tl\ system \xt\ (with the unique \im\ $\mu$), such that $(X,\mathsf{Borel}(X), \mu,T)$ and $(\Omega, \Fcal, \nu, S)$ are measure-theoretically isomorphic. 

The celebrated Jewett-Krieger theorem asserts that every invertible ergodic m.p.t. has a strictly ergodic (i.e. minimal and uniquely ergodic) \tl\ model. In this work, we are interested in the relations between various 
strictly ergodic models of the same ergodic system. More specifically we will focus on the situation when one model is a \tl\ extension of another. Also, of particular interest to us is the case when the 
underlying m.p.t. belongs to the class of the simplest ergodic systems, namely those with discrete spectrum. Recall that a ``standard model'' of a discrete spectrum ergodic system has the form of a rotation, by a \tl\ generator, on a compact monothetic group or, equivalently, is a minimal equicontinuous system (Halmos--von Neumann Theorem, see e.g. \cite[Chapter 1, Section 2]{G-03}).

We will provide a natural classification of all the 
strictly ergodic
topological models of an ergodic system with discrete spectrum which topologically extend the standard model. 
Before we formulate our results, we need to establish some basic terminology. We will assume that the reader is familiar with the textbook notions of a factor (in particular, maximal equicontinuous factor), extension, isomorphism, ergodicity, minimality, discrete spectrum, etc., as well as with the definition of  lower and upper Banach density of a subset of integers and some related notions. When dealing with an m.p.t.\! which arises from a \tl\ \ds\ equipped with an invariant probability measure, we will always assume that the sigma-algebra in question is the Borel sigma-algebra completed with respect to that measure (and skip it in the notation of the system). The reader is referred to Furstenberg's classical monograph \cite{Fur-81}, or to \cite{G-03} for more details and background.

\medskip

Most of the time we will consider a pair of uniquely ergodic \tl\ \ds s \xt\ and \zs. In such a case, $\mu$ 
and $\nu$ will always denote the unique \im s on $X$ and $Z$, respectively. 
Notice that a \tl\ factor of a uniquely ergodic system is uniquely ergodic as well.

\begin{defn}
We say that \xt\ is an \emph{isomorphic extension} of \zs\ if \xt\ is uniquely ergodic and there 
exists a \tl\ factor map $\pi:X\to Z$ which is, at the same time, a measure-theoretic isomorphism 
between \xmt\ and \zns.
\end{defn}

It is clear that an isomorphic extension of a \tl\ model of some (ergodic) m.p.t. is another \tl\ model of the same m.p.t. Notice that the requirement in the definition is stronger than just assuming that \xt\ is a \tl\ extension of \zs\ and that the systems \xmt\ and \zns\ are measure-theoretically isomorphic. The measure-theoretic isomorphism must be realized by the same map which establishes the \tl\ factor relation. Isomorphic extensions are important because they carry over many ``hybrid properties'' of the base system \zs\ to the extended system \xt. The adjective ``hybrid'' refers to properties which combine measure-theoretic and \tl\ notions. As an example recall the notion of uncorrelation: two bounded complex-valued \sq s $(x_n)_{n\ge 1}$ and $(y_n)_{n\ge 1}$ are called \emph{uncorrelated} if
$$
\lim_n\left|\frac1n\sum_{i=1}^nx_i\bar y_i - \Bigl(\frac1n\sum_{i=1}^nx_i\Bigr) \Bigl(\frac1n\sum_{i=1}^n \bar y_i\Bigr)\right|= 0.
$$
\emph{Sarnak's conjecture} asserts that any \tl\ dynamical system \xt\ with zero \tl\ 
entropy satisfies the following \emph{uncorrelation condition}: 
\begin{quote} 
Fix an arbitrary continuous (complex-valued) function $f$ on $X$ and any point $x\in X$. Then, the \sq\ $(a_n)_{n\ge 1}$ defined by: $a_n = f(T^nx)$ is uncorrelated to the \emph{M\"obius function} $(\mu_n)_{n\ge 1}$ (see \cite{Sarnak} for more details).
\end{quote} 
In \cite[Theorem 4.1]{DK} it is proved that if a \tl\ system \zs\ fulfills 
the uncorrelation condition and \xt\ is an isomorphic extension of \zs\ then \xt\ also fulfills the uncorrelation condition. 
In fact, the proof does not rely on any specific property of the M\"obius function; it shows that isomorphic extensions preserve the property of uncorrelation with respect to any fixed bounded complex-valued \sq.

Since minimal rotations of compact monothetic groups fulfill the uncorrelation condition, it follows from \cite{DK} that every isomorphic extension of the standard model of an ergodic system with discrete spectrum also fulfills the uncorrelation condition\footnote{Let us mention that recently, H. El Abdalaoui, M. Lema\'nczyk and T. de la Rue (private communication) proved that any \tl\ model of an ergodic system with {\sl irrational} discrete spectrum satisfies Sarnak's uncorrelation condition. 
It is striking that even for the system with the rational discrete spectrum $\{-1,1\}$ (whose standard model is the two-point periodic system) the validity of the conjecture for all (uniquely ergodic) \tl\ models remains undecided.}.

\medskip
An isomorphic extension can be described as one given by a \tl\ factor map $\pi:X\to Z$ which becomes invertible after discarding from both spaces subsets of measure zero. By analogy, one can require that the map $\pi$ becomes invertible after discarding from both spaces some meager (first category) sets. This leads to the notion of an almost 1-1 extension, which can be regarded as a \tl\ analog of an isomorphic extension (how good is this analogy -- will become clear from what follows):

\begin{defn}
We say that \xt\ is an \emph{almost one-to-one (almost 1-1) extension } of \zs\ if there exists a \tl\ factor map $\pi:X\to Z$ such that the union of the singleton fibers is dense in $X$. If \xt\ is minimal it suffices that at least one singleton fiber exists.
\end{defn}

By invariance, for any ergodic measure on $X$, the union of singleton fibers has measure either zero or one.
Thus, uniquely ergodic almost 1-1 extensions can be classified as follows (note that these are ``hybrid'' notions):

\begin{defn}
Let \xt\ be a uniquely ergodic system which is an almost 1-1 extension of \zs. We say that this extension
is \emph{regular} if the union of the singleton fibers has full $\mu$ measure. 
Otherwise, when this union has measure zero, the extension is called \emph{irregular}.
\end{defn}

It is obvious that a regular almost 1-1 extension is automatically isomorphic. On the other hand, it is well-known that irregular almost 1-1 extensions are usually far from being isomorphic and many invariants
need not be preserved. For instance the extension may have much richer spectral properties, larger entropy, many more \im s, etc. So, two natural questions arise: can an irregular almost 1-1 extension still be isomorphic? And, can an isomorphic extension be not almost 1-1 at all? The first question has a positive answer; an appropriate example is provided in \cite[Example 5.1]{DK}. Another example of this phenomenon is to be found in the work of Kerr and Li \cite[Section 11]{KL}, see Remark 5.2.\! in \cite{Gl-str}.
In the present paper we will provide a positive answer to the second question, as well,
already in the class of minimal systems. 
As a result, we obtain the following classification of isomorphic extensions (ordered decreasingly with respect to ``\tl\ closeness''), where all 
three, mutually disjoint classes, are nonempty:
\begin{enumerate}
	\item regular almost 1-1 extensions,
	\item irregular, yet isomorphic, almost 1-1 extensions,
	\item isomorphic, but not even almost 1-1 extensions.
\end{enumerate}

For example, if \zs\ is a standard model of an ergodic m.p.t.\! with 
infinite
discrete spectrum, we obtain three types of 
strictly ergodic
\tl\ models with subtly different \tl\ properties.

Let us mention that there exist 
strictly ergodic \tl\ models of ergodic systems with 
infinite 
discrete spectrum which are \tl ly much removed from the standard models. For instance, there exist such models which are \tl ly weakly (or even srtongly) mixing, in which case there is no topological factor map in either direction, see e.g. \cite{Leh} or \cite{GW-06}.

\medskip
It turns out that isomorphic extensions can also be used to characterize the classes of systems called \emph{mean equicontinuous} and \emph{Weyl mean equicontinuous}.

\begin{defn}
If \xt\ is a \ds, we define the \emph{Besicovitch} and \emph{Weyl distances} between points, respectively, as 
\begin{align*}
d_{\mathsf B}(x,y): &=\limsup_{n\to\infty}\frac1n \sum_{i=0}^{n-1}d(T^ix,T^iy), \\ 
d_{\mathsf W}(x,y): &=\limsup_{n-m\to\infty}\frac1{n-m} \sum_{i=m}^{n-1}d(T^ix,T^iy).
\end{align*}
A system \xt\ is called \emph{mean equicontinuous} or \emph{Weyl mean equicontinuous}\footnote{In \cite{LTY} the authors call this notion \emph{Banach mean equicontinuous} (probably referring to 
the notion of upper Banach density). We believe that the reference to Weyl is more appropriate here.} 
if for every $\epsilon>0$ there exists $\delta>0$ such that 
$$
d(x,y)<\delta \implies d_{\mathsf B}(x,y)<\epsilon, \ \ d(x,y)<\delta \implies d_{\mathsf W}(x,y)<\epsilon,
$$
respectively.
\end{defn}
\medskip

It is easy to see that the Besicovitch and Weyl distances are invariant pseudometrics. Mean equicontinuity is continuity of the quotient map $(X,d)\to(X/_\approx\,,\,d_{\mathsf B})$, where $\approx$ is the \emph{mean asymptotic} relation $x\approx y \iff d_{\mathsf B}(x,y)=0$. 
An analogous statement with $d_{\mathsf B}$ replaced by $d_{\mathsf W}$ applies to Weyl mean equicontinuity. 
\smallskip

Li, Tu and Ye  \cite{LTY} posed the following questions, 
based on observation of the examples which were available to them:
\begin{que}\label{que0}
Is every minimal mean equicontinuous system also Weyl mean equicontinuous?
\end{que}
\begin{que}\label{que}
Is every minimal Weyl mean equicontinuous system an almost 1-1 extension of its maximal equicontinuous factor?
\end{que}
In the next two sections we prove theorems which allow us to answer Question \ref{que0} positively (Theorem \ref{mean}), then we reduce Question \ref{que} to a problem concerning isomorphic extensions, and finally, 
answer this question negatively (Theorem \ref{main}).
In the last section we present some concrete instances of our main result and refer to some related
topics in the literature.

\br

\section{A characterization of minimal mean equicontinuous systems}

\begin{thm}\label{mean}
Let \xt\ be a minimal \tl\ \ds\ with maximal equicontinuous factor \zs.
Then the following are equivalent
\begin{enumerate}
	\item \xt\ is mean equicontinuous,
	\item \xt\ is Weyl mean equicontinuous,
  \item \xt\ is an isomorphic extension of \zs\ (in particular, \xt\ is then uniquely ergodic).
\end{enumerate}
\end{thm}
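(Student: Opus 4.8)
The plan is to prove the cycle $(2)\Rightarrow(1)\Rightarrow(3)\Rightarrow(2)$. The implication $(2)\Rightarrow(1)$ is free of content: the Besicovitch averages occur among the windows defining the Weyl distance, so $d_{\mathsf B}\le d_{\mathsf W}$ pointwise, and Weyl mean equicontinuity implies mean equicontinuity. For $(1)\Rightarrow(3)$, the first step is to recognise $(Z,S)$, up to conjugacy, as the metric quotient of $(X,d_{\mathsf B})$. Mean equicontinuity says precisely that $q\colon (X,d)\to (X/_\approx,d_{\mathsf B})$ is (uniformly) continuous, so $M:=X/_\approx$ is a compact metric space on which $T$ acts isometrically; thus $(M,T)$ is an equicontinuous, necessarily minimal, factor of $(X,T)$. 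Since $M$ is then a factor of the maximal equicontinuous factor $Z$, say via $\sigma\colon Z\to M$ with $\sigma\circ\pi=q$, we get $\pi(x)=\pi(y)\Rightarrow q(x)=q(y)\Rightarrow d_{\mathsf B}(x,y)=0$. Conversely, realising $Z$ as a rotation on a compact monothetic group with a translation-invariant metric $\rho$, one has $\rho(S^iz,S^iw)=\rho(z,w)$ for all $i$, so $\rho(\pi x,\pi y)=\lim_n\frac1n\sum_{i=0}^{n-1}\rho(S^i\pi x,S^i\pi y)$ is bounded by $\mathrm{diam}(Z)$ times the upper density of $\{i:d(T^ix,T^iy)\text{ large}\}$ plus the modulus of continuity of $\pi$; by Markov's inequality this vanishes when $d_{\mathsf B}(x,y)=0$, giving $d_{\mathsf B}(x,y)=0\Rightarrow\pi(x)=\pi(y)$. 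Hence $q$ and $\pi$ have the same fibres, $Z\cong M$ as factors, and we may assume $\pi=q$, i.e.\ $d_{\mathsf B}(x,y)=0\iff\pi(x)=\pi(y)$.

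For the core of $(1)\Rightarrow(3)$ I use a joining argument. Fix any ergodic measure $\mu$ on $X$; then $\pi_*\mu=\nu$, the unique invariant measure of $Z$, and I form the relatively independent self-joining $\lambda=\int_Z\mu_z\times\mu_z\,d\nu(z)$, which is $T\times T$-invariant and concentrated on $\{(x,y):\pi(x)=\pi(y)\}=\{d_{\mathsf B}=0\}$. Applying Birkhoff's ergodic theorem to the metric $d\in C(X\times X)\subset L^1(\lambda)$, the averages $\frac1n\sum_{i=0}^{n-1}d(T^ix,T^iy)$ converge $\lambda$-a.e.\ and in $L^1(\lambda)$ to $\mathbb{E}_\lambda(d\mid\mathcal{I})$, whose value is a.e.\ at most $\limsup_n\frac1n\sum_{i=0}^{n-1}d(T^ix,T^iy)=d_{\mathsf B}(x,y)=0$; since $\int d\,d\lambda=\int\mathbb{E}_\lambda(d\mid\mathcal{I})\,d\lambda$, we conclude $\int d\,d\lambda=0$. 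Unwinding the disintegration, $\int\!\int d(x,y)\,d\mu_z(x)\,d\mu_z(y)=0$ for $\nu$-a.e.\ $z$, so each $\mu_z$ is a point mass $\delta_{\phi(z)}$ with $\phi$ a measurable section of $\pi$; thus $\pi$ is a measure-theoretic isomorphism of $(X,\mu,T)$ onto $(Z,\nu,S)$. The same device, applied to the joining $\int_Z\delta_{(\phi_1(z),\phi_2(z))}\,d\nu(z)$ attached to two ergodic measures $\mu_i=(\phi_i)_*\nu$, forces $\phi_1=\phi_2$ $\nu$-a.e., so $(X,T)$ is uniquely ergodic; hence it is an isomorphic extension of $(Z,S)$.

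For $(3)\Rightarrow(2)$, let $\pi\colon(X,\mu,T)\to(Z,\nu,S)$ be an isomorphic extension with $(Z,S)$ the maximal equicontinuous factor, realised as a minimal rotation $R_\alpha$ on a compact monothetic group $K$ with Haar measure $m_K$, and let $\phi$ be the $\mu$-a.e.\ inverse of $\pi$. Fix $x,y\in X$ and consider weak$^*$ limits $\theta$ of the moving averages $\frac1{n-m}\sum_{i=m}^{n-1}\delta_{(T\times T)^i(x,y)}$, $n-m\to\infty$. Each such $\theta$ is $T\times T$-invariant; unique ergodicity of $(X,T)$ gives uniform convergence of moving averages, so both marginals of $\theta$ equal $\mu$, whence $(\pi\times\pi)_*\theta$ is an invariant measure on the orbit closure of $(\pi x,\pi y)$ in $K\times K$. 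That orbit closure is the coset $(\pi x,\pi y)+\{(k,k):k\in K\}$, a single minimal set, uniquely ergodic, so $(\pi\times\pi)_*\theta=(\mathrm{id}\times R_c)_*m_K$ with $c=\pi x-\pi y$, independent of the window. Since $\pi$ is a measure isomorphism, $\theta$ is concentrated on $\{(x',y'):x'=\phi(\pi x'),\ y'=\phi(\pi y')\}$, hence $\theta=(\phi\times\phi)_*(\mathrm{id}\times R_c)_*m_K$ and $\int d\,d\theta=\int_K d(\phi(k),\phi(k+c))\,dm_K(k)=:\Psi(c)$. Thus all moving averages converge and $d_{\mathsf W}(x,y)=\Psi(\pi x-\pi y)$. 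Finally $\Psi(c)\to 0$ as $c\to 0$ in $K$ — restrict $\phi$ to a large compact set on which it is uniformly continuous (Lusin's theorem) and use translation-invariance of $m_K$ and of the metric on $K$ — while $\pi x-\pi y$ is small whenever $d(x,y)$ is small (uniform continuity of $\pi$); this is exactly Weyl mean equicontinuity.

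The main obstacle is the middle of $(1)\Rightarrow(3)$: passing from the soft fact that the fibres of $\pi$ are degenerate for $d_{\mathsf B}$ to the hard conclusion that they are $\mu$-a.e.\ singletons. The mechanism that makes this work — and that, as a bonus, delivers unique ergodicity without a separate argument — is the observation that for a $T\times T$-invariant measure carried by $\{d_{\mathsf B}=0\}$ the conditional expectation $\mathbb{E}_\lambda(d\mid\mathcal{I})$ is squeezed below $d_{\mathsf B}=0$, forcing $\int d\,d\lambda=0$. In $(3)\Rightarrow(2)$ the only delicate point is upgrading the $\mu$-a.e.\ section $\phi$ to the genuinely-for-all-pairs statement required by Weyl mean equicontinuity, which the Lusin approximation together with invariance of Haar measure handles.
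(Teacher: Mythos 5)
Your proposal is correct, but both nontrivial implications follow routes genuinely different from ours, so let me compare. In $1.\Rightarrow 3.$ you first get that $\pi$-fibers are $d_{\mathsf B}$-null by exhibiting the quotient $(X/_\approx\,,d_{\mathsf B})$ as a compact isometric (hence equicontinuous) factor of \xt\ and invoking the universal property of the maximal equicontinuous factor, whereas we quote Veech's theorem that fiber pairs are regionally proximal and then kill them with mean equicontinuity; your route avoids regional proximality altogether. More substantially, you then obtain the point-mass disintegration and unique ergodicity in one stroke: for any $T\times T$-invariant measure $\lambda$ carried by $\{d_{\mathsf B}=0\}$ the Birkhoff limit of the averages of $d$ is squeezed below the limsup $d_{\mathsf B}=0$, forcing $\int d\,d\lambda=0$; applied to the relatively independent self-joining and to the graph joining of two ergodic measures, this replaces our argument via an ergodic component of a joining over $\nu$ and a generic pair lying in one fiber --- your version needs no ergodic decomposition of the joining and no generic points, which is a genuine simplification. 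In $3.\Rightarrow 2.$ our argument uses only that $S$ is an isometry: Lusin plus uniform genericity shows that two close orbits simultaneously visit a $\delta$-neighborhood of the graph of $\pi^{-1}|_A$ with lower Banach density close to $1$, which yields Weyl mean equicontinuity directly. You instead exploit the group structure of $Z$ (legitimate here, since the maximal equicontinuous factor of a minimal $\Z$-system is a minimal rotation on a compact monothetic group $K$): identifying every weak$^*$ limit of window empirical measures through unique ergodicity of $X$ and of the coset orbit closure in $K\times K$, you arrive at the exact formula $d_{\mathsf W}(x,y)=\int_K d(\varphi(k),\varphi(k+c))\,dm_K(k)$ with $c=\pi y-\pi x$, and conclude by continuity of this integral at $c=0$ (Lusin and translation invariance of Haar measure). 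This buys more --- the Weyl limsup is an honest limit with a closed formula --- at the price of using the algebraic structure of the base, while our argument works verbatim over any isometric base. The implication $2.\Rightarrow 1.$ is the same trivial observation in both. Only minor quibbles: $\varphi$ is defined $\nu$-a.e.\ (not $\mu$-a.e.), and your $(\mathrm{id}\times R_c)_*m_K$ should be read as the image of $m_K$ under $k\mapsto(k,k+c)$; neither affects the argument.
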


\begin{rems}
\begin{enumerate}
	\item[a.] Note that if \xt\ is an isomorphic extension of a minimal equicontinuous system \zs, then \zs\ is necessarily its maximal equicontinuous factor.
	\item[b.] Huang, Lu and Ye \cite{HLY} introduce a hybrid notion of $\mu$-equicontinuity (of a \tl\ system with a fixed \im\ $\mu$) and prove that this property implies discrete spectrum of \xmt. Garcia-Ramos \cite{GR} proves that an even weaker (hybrid) property, $\mu$-mean equicontinuity, is 
in fact equivalent to discrete spectrum of \xmt. The implication 1.$\implies$3. above (which is also implicit in the proof of Theorem 3.8 of \cite{LTY}) has the same flavor; it implies that mean equicontinuous systems are uniquely ergodic with discrete spectrum. Clearly, the condition 3. gives 
more specific information about the system.
\end{enumerate}
\end{rems}

\begin{proof}[Proof of the theorem] 
We let $\pi:X\to Z$ denote the maximal equicontinuous factor map.
\medskip

$1. \imp 3.$:\ 
Let \xt\ be mean equicontinuous. First we show that two points which are in the same fiber
over the maximal equicontinuous factor are mean asymp\-totic (i.e., the Besicovitch distance 
between them is zero).
 
By a theorem of Veech \cite{V} 
(see also \cite{Aus}) such points, say $x,y$, are regionally proximal, i.e., there are \sq s $n_k,\, x_k,\, y_k$ with $x_k\to x$, $y_k\to y$ and $d(T^{n_k}x_k, T^{n_k}y_k)\to 0$.
Given $\epsilon > 0$ let $\delta$ be as in the definition of mean equicontinuity. For 
a sufficiently large $k$ we have:
\begin{align*}
&d(x_k,x)<\delta, \text{ hence } d_{\mathsf B}(x_k,x)<\epsilon, \\
&d(y_k,y)<\delta, \text{ hence } d_{\mathsf B}(y_k,y)<\epsilon, \\
&d(T^{n_k}x_k,T^{n_k}y_k)<\delta, \text{ hence } d_{\mathsf B}(x_k,y_k) = d_{\mathsf B}(T^{n_k}x_k,T^{n_k}y_k)<\epsilon.
\end{align*}
We have shown that $d_{\mathsf B}(x,y)<3\epsilon$ for every $\epsilon>0$, hence $d_{\mathsf B}(x,y)=0$. 

As \zs\ is minimal and equicontinuous, it is strictly ergodic. 
Suppose now that $\pi$ is not an isomorphic extension. This means that on $X$ there are either (at least) two distinct ergodic measures $\mu$, $\mu'$, or there is just one ergodic measure $\mu$, but its disintegration over $\nu$ produces fiber measures which, with positive $\nu$ probability, are not 
point masses.

In the first case we take any joining $\xi$ of $\mu$ with $\mu'$ over the common factor $\nu$, in the other case we take the relatively independent self-joining of $\mu$ with itself over the common factor $\nu$ 
(we set $\mu'=\mu$). In either case $\xi$ is not concentrated on the diagonal. Becuase both $\mu$ and $\mu'$ are ergodic, $\xi$ decomposes into ergodic joinings. In either case we can find an ergodic component $\xi'$ of $\xi$ which not concentrated on the diagonal. The 
push-forward measure $(\pi \times \pi)_*(\xi)$ and hence also $(\pi \times \pi)_*(\xi')$ is the identity self-joining of $\nu$ (this is the meaning of the fact that our joinings were ``over the common factor $\nu$''). Since $\xi'$-almost every pair $(x,y)$ is generic for $\xi'$, and $\xi'$-almost every pair $(x,y)$ satisfies $\pi x=\pi y$, we can find a pair $(x,y)$ generic for $\xi'$, such that $x$ and $y$ are in the same fiber. On the other hand, since $\xi'$ is not supported by the diagonal, there is a closed $\epsilon$-neighborhood of the diagonal whose open complement has positive $\xi'$ measure, say $\gamma>0$.
As $(x,y)$ is generic, the orbit of $(x,y)$ visits this complement with lower density larger than $\gamma$. Thus, $d_{\mathsf B}(x,y)>\epsilon\gamma>0$, and this contradicts the fact that the pair $x,y$ is mean 
asymptotic.

\medskip

$3. \imp 2.$:\ 
Now suppose that \xt\ is a (uniquely ergodic) isomorphic extension of \zs. 
Let $\pi$ be the factor map from $X$ onto $Z$ (which is also a measure-theoretic isomorphism). On $Z$ we choose a metric $d$ for which $S$ is an isometry. Moreover, we can assume that the metric on $X$ (also denoted by $d$) satisfies $d(x,y)\ge d(\pi x,\pi y)$.

The map $\pi$ becomes invertible after discarding a null set in $X$ and a null set in $Z$. 
The inverse function $\pi^{-1}: Z \to X$ (defined almost everywhere on $Z$) is measurable, hence,
by Luzin's theorem, continuous when restricted to some subset $A\subset Z$ of large $\nu$ measure, say $\nu(A)>1-\epsilon$. By the regularity of $\nu$ we can assume that $A$ is closed, hence compact. 
Let $f : A \to X$ denote the restriction $\pi^{-1}|_A$. Let $U$ be the $\delta$-neighborhood of $f(A)$, where $\delta$ will be specified later. Clearly, $\mu(U)\ge\mu(f(A))=\nu(A)>1-\epsilon$. 

Take two points $x,y\in X$ with $d(x,y)<\delta$ and observe their forward orbits. First note
that  $d(\pi x, \pi y) < \delta$ and, because the map $S$ is an isometry, the corresponding points on 
their orbits remain at the same distance. By the unique ergodicity of \xt, the points $x$ and $y$ are 
uniformly generic\footnote{In a uniquely ergodic system \xt\ every point $x$ is \emph{uniformly generic}, which means that the limit \vspace{-5pt} $$\lim_{n-m\to\infty}\frac1{n-m}\sum_{i=m}^{n-1}f(T^ix)$$ exists for every continuous function $f$ and equals the integral of $f$.} for $\mu$. In particular, their forward orbits visit $U$ with lower Banach densities larger than $1-\epsilon$. Hence, with lower Banach density at least $1-2\epsilon$ they fall simultaneously in $U$.

Consider an $n$ such that both $T^nx, T^ny$ fall in $U$. Then, there are points $x',y'\in f(A)$ 
such that $d(T^nx,x')<\delta, \ d(T^ny,y')<\delta$. This implies that 
\begin{multline*}
d(\pi x',\pi y')\le d(\pi x',\pi T^nx) + d(\pi T^nx,\pi T^ny) + d(\pi T^ny,\pi y')\le \\
d(x',T^nx) + d(S^n \pi x,S^n \pi y) + d(T^ny, y') \le 3\delta.
\end{multline*}
Since $x'=f(\pi x'), \ y'=f(\pi y')$, and $f$ is uniformly continuous on $A$, for a suitably small $\delta<\frac\epsilon4$ we have $d(x',y')<\frac\epsilon2$, and thus
$$
d(T^nx,T^ny)\le d(T^nx,x')+d(x',y')+d(y',T^ny)<\delta+\frac\epsilon2+\delta<\epsilon.
$$
We have shown that given $\epsilon >0$ there exists $\delta > 0$ such that $d(x,y)<\delta$ implies
$d(T^nx, T^ny)<\epsilon$ for $n$'s in a set of lower Banach density at least $1-\epsilon$. 
But that is exactly the meaning of Weyl mean equicontinuity of \xt. 
\medskip

The implication $2. \imp 1.$ is trivial.
\end{proof}

The above theorem not only answers Question \ref{que0}, but also allows us to formulate Question \ref{que} 
in an equivalent way which does not refer to the notion of mean equicontinuity:

\begin{que}\label{que1} Let $(X,T)$ be a minimal system and let \zs\ be its maximal equicontinuous factor via a map $\pi:X\to Z$. Suppose that \xt\ is an isomorphic extension of \zs, is $\pi$ necessarily almost one-to-one?
\end{que}

As already mentioned, we will answer this question negatively. In fact, in the next section we will prove a much more general result, where isomorphic non almost one-to-one extensions will be shown to be 
generic\footnote{Here and in the sequel, the term ``generic'' refers to ``belonging to a residual subset'', where a residual subset of a Polish space $X$ is a set which contains a dense $G_\del$ subset of $X$.
Not to be confused with ``generic points'' for an \im.} in a certain setup.
\medskip

We take this opportunity to investigate the minimal size of a fiber in a \tl\ extension. A priori, the general semicontinuity properties of the ``fiber size function'' do not suffice to draw the conclusion 
that in the non-almost 1-1 case the infimum of this function is 
positive\footnote{The function $\diam$ defined in the proof is upper (and not lower) semicontinuous, thus it need not achieve its minimum.}, hence the lemma below may be of interest.

\begin{lem}
Let $\pi:(X,T)\to (Z,S)$ be a \tl\ factor map, where $X$ (and hence also $Z$) is minimal.
Define $\diam:Z\to[0,\diam(X)]$ by $\diam(z)=\diam(\pi^{-1}(z))$. Then either $\pi$ is 
almost 1-1 (in which case $\inf(\diam) = \min(\diam)=0$) or $\inf(\diam)>0$. 
\end{lem}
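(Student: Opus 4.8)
The plan is to exploit upper semicontinuity of the function $\diam$ together with minimality of $(Z,S)$, via a Baire-category argument on $Z$. First I would record the elementary fact that $\diam$ is upper semicontinuous: if $z_k\to z$ and $x_k\in\pi^{-1}(z_k)$, $y_k\in\pi^{-1}(z_k)$ with $d(x_k,y_k)\to\limsup_k\diam(z_k)$, then by compactness a subsequence of $(x_k,y_k)$ converges to some $(x,y)$ with $\pi x=\pi y=z$, whence $\limsup_k\diam(z_k)=d(x,y)\le\diam(z)$. Consequently, for each $c>0$ the set $\{z\in Z:\diam(z)<c\}$ is open in $Z$.

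Now suppose $\pi$ is not almost 1-1, so there is no singleton fiber at all (here we use that $X$ is minimal, hence $Z$ is minimal, and in a minimal extension either the singleton fibers are dense or there are none — this is the dichotomy recalled right after Definition of almost 1-1 extension). Thus $\diam(z)>0$ for every $z\in Z$. Write $Z=\bigcup_{k\ge 1}\{z:\diam(z)<1/k\}^{c}=\bigcup_{k\ge1} F_k$ where $F_k:=\{z\in Z:\diam(z)\ge 1/k\}$ is closed by upper semicontinuity, and $\bigcup_k F_k=Z$ since every fiber has positive diameter. By the Baire category theorem (applied in the compact metric space $Z$), some $F_{k_0}$ has nonempty interior, i.e. there is a nonempty open $V\subset Z$ with $\diam(z)\ge 1/k_0$ for all $z\in V$.

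It remains to propagate this lower bound from $V$ to all of $Z$ using minimality of $(Z,S)$. Given any $z\in Z$, by minimality there is $n$ with $S^n z\in V$, so $\diam(S^n z)\ge 1/k_0$. Since $T$ maps $\pi^{-1}(z)$ onto $\pi^{-1}(S^n z)$, continuity of $T$ (and of $T^{-1}$) on the compact space $X$ gives a modulus of uniform continuity: choosing $\eta>0$ so that $d(a,b)<\eta\Rightarrow d(T^{-n}a,T^{-n}b)<\tfrac{1}{2k_0}$ would fail to be uniform in $n$, so instead I argue contrapositively. Suppose $\inf(\diam)=0$; pick $z_j$ with $\diam(z_j)\to 0$ and $x_j,x_j'\in\pi^{-1}(z_j)$ realizing these diameters. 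For each $j$ choose $n_j$ with $S^{n_j}z_j\in V$; then $T^{n_j}x_j,\,T^{n_j}x_j'\in\pi^{-1}(S^{n_j}z_j)$ and $d(T^{n_j}x_j,T^{n_j}x_j')\ge 1/k_0$. Passing to a subsequence we may assume $x_j\to x$, $x_j'\to x'$ (so $d(x,x')=\lim_j\diam(z_j)=0$, i.e. $x=x'$) while $T^{n_j}x_j\to w$, $T^{n_j}x_j'\to w'$ with $d(w,w')\ge 1/k_0$ and $\pi w=\pi w'$. But then we would need a point $u=x=x'$ whose images under the homeomorphisms $T^{n_j}$ separate while $x_j,x_j'$ both converge to $u$; this does not yet give a contradiction by itself, so here is the fix: replace $x_j,x_j'$ by $T^{n_j}x_j,T^{n_j}x_j'$ from the start. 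That is, run the category argument to get $V$ with $\diam\ge 1/k_0$ on $V$; then for arbitrary $z$ pick $n$ with $S^nz\in V$ and note $\diam(z)=\diam(\pi^{-1}(z))\ge c(n)\cdot\diam(\pi^{-1}(S^nz))$ where $c(n)>0$ is a uniform-continuity constant for $T^{-n}$ on $X$ — this is not uniform in $n$, which is exactly the obstacle.

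The main obstacle is thus precisely this last transfer step: upper semicontinuity plus category only bounds $\diam$ below on an open set, and $T^{-n}$ can contract distances by an amount depending on $n$. The clean way around it, which I would adopt, is to work not with $\diam$ but with the full fiber structure: consider the subsystem $R=\{(x,y)\in X\times X:\pi x=\pi y\}$ of $(X\times X,T\times T)$, which is closed and $(T\times T)$-invariant. The orbit closure of any point of $R$ is a minimal subset (after passing to one) that is not contained in the diagonal (since some fiber is nondegenerate — otherwise $\pi$ is 1-1 everywhere, a fortiori almost 1-1). A minimal subset $M\subset R$ not in the diagonal has $\inf\{d(x,y):(x,y)\in M\}=:\rho>0$ by compactness and the fact that $M\cap\Delta=\emptyset$ (as $M$ is minimal and $\Delta\cap M$ would be a proper closed invariant subset). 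For arbitrary $z\in Z$, minimality of $M$ (whose projection to $Z$ is all of $Z$, again by minimality) provides $(x,y)\in M$ with $\pi x=\pi y=z$, hence $\diam(z)\ge d(x,y)\ge\rho$. This gives $\inf(\diam)\ge\rho>0$, completing the proof; the $\diam$-semicontinuity remarks above then explain why this conclusion is not automatic and why the detour through $R$ is needed.
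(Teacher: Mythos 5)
The decisive step in your final argument is the claim that, since some fiber is nondegenerate, one can pass from a point $(x,y)\in R\setminus\Delta$ to a minimal subset $M\subset R$ that is \emph{not contained in the diagonal}. That is a non sequitur: the minimal set you find inside $\overline{\mathcal{O}_{T\times T}(x,y)}$ may perfectly well be the diagonal $\Delta$ itself, which is a minimal subset of $R$ (being a copy of the minimal system $X$). This happens precisely when $(x,y)$ is a proximal pair, and the situation in which \emph{every} minimal subset of $R$ is $\Delta$ is exactly that of a proximal extension. So what your relative-product argument actually proves is: if $\pi$ is not a proximal extension, then $\inf(\diam)>0$. The lemma, however, needs the conclusion under the weaker hypothesis that $\pi$ is not almost 1-1; almost 1-1 implies proximal but not conversely, and a proximal extension of metric minimal systems need not have any singleton fiber. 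In that remaining case (fiber pairs all proximal, yet no singleton fiber) your argument yields nothing, and your first attempt --- the Baire argument on $Z$ with the closed sets $F_k$ --- cannot, as you yourself observe, transfer the lower bound from the open set $V$ to all of $Z$, because the contraction constants of $T^{-n}$ are not uniform in $n$. So the proposal is genuinely incomplete.

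For contrast, the paper closes exactly this hole by making the Baire argument compatible with the dynamics instead of appealing to $R$: choose $\delta_n\downarrow 0$ with $d(x,x')<\delta_{n+1}\Rightarrow d(Tx,Tx')<\delta_n$, and set $A_n=\{z:\diam(z)<\delta_n\}$. These sets are open (your upper semicontinuity observation, which is correct, is used here too), nested, and satisfy $S(A_{n+1})\subset A_n$ because $\pi^{-1}(Sz)=T\pi^{-1}(z)$. Hence, if $\inf(\diam)=0$, all $A_n$ are nonempty and each contains arbitrarily long orbit segments, so by minimality of $Z$ each is dense; Baire then produces a point with $\diam(z)=0$, i.e.\ a singleton fiber, i.e.\ $\pi$ is almost 1-1. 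The missing idea in your write-up is precisely this $S$-compatible nesting of the sets $\{\diam<\delta\}$, which substitutes for the unavailable uniform-in-$n$ continuity of $T^{-n}$ and covers the proximal-but-not-almost-1-1 case that your minimal-set argument cannot reach.
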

\begin{proof}
Suppose $\pi$ is not almost 1-1. Then $\diam$ is positive everywhere on $Z$. 
Pick a decreasing to zero \sq\ $(\delta_n)$, such that $d(x,x')<\delta_{n+1}\implies d(Tx,Tx')<\delta_n$ (here $d$ represents the metric on $X$).
Because $S$ is a homeomorphism, we have $z=S^{-1}Sz$ and hence 
$$
\pi^{-1}(z) = \pi^{-1}S^{-1}Sz = T^{-1}\pi^{-1}Sz,
$$
which implies that each set $A_{n+1}=\{y:\diam(y)<\delta_{n+1}\}$ is mapped by $S$ into $A_n$. Notice that 
all the sets $A_n$ are open. If all these sets were nonempty, then each of them would contain arbitrarily
long pieces of orbits, which, by minimality, would imply that all these sets were dense. By the Baire 
category theorem their intersection would be nonempty, implying that $\diam(z)=0$ at some point, i.e., the extension would be almost 1-1. We have shown that indeed $\diam$ is bounded from below by some positive constant. 
\end{proof}

We now prove another characterization of isomorphic extensions which will become useful in the following section.
Given a factor map between dynamical systems $\pi : (X,T) \to (Z,S)$, we form an associated
system, $W = X \underset{Z}{\times}X = \{(x,x') \in X \times X : \pi(x) = \pi(x')\}$, with the diagonal action $(T \times T)(x,x') = (Tx,Tx')$. We call $W$ the {\em relative product} of $X$ over $Z$. 
We then have:

\begin{prop}\label{iff}
A system \xt\ is an isomorphic extension of \zs\ if and only if there exists a \tl\ factor map 
$\pi : (X,T) \to (Z,S)$ such that the associated relative product $(W, T \times T)$ is uniquely ergodic.
\end{prop}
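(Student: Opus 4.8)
The plan is to reduce both directions to the following dictionary: an invariant measure on $W=X\underset{Z}{\times}X$ is exactly a self-joining of an invariant measure $\mu$ of $(X,T)$ which sits over an invariant measure $\nu$ of $(Z,S)$ (i.e.\ its push-forward under $\pi\times\pi$ is the identity self-joining of $\nu$), and among such joinings the diagonal measure $\mu_\Delta$ (the push-forward of $\mu$ under $x\mapsto(x,x)$) and the relatively independent joining $\mu\underset{\nu}{\times}\mu$ are always present. Note first that $W$ is a nonempty ($\Delta_X\subseteq W$), closed, $(T\times T)$-invariant subset of $X\times X$, hence a genuine compact system to which the notion of unique ergodicity applies.

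For the forward implication, assume $(X,T)$ is an isomorphic extension of $(Z,S)$ via $\pi$; let $\mu$ be its unique invariant measure and $\nu=\pi_*\mu$. Since $\pi$ realizes a measure isomorphism, there is a measurable section $\phi:Z\to X$ with $\phi\circ\pi=\id_X$ $\mu$-a.e. I would take an arbitrary invariant measure $\lambda$ on $W$; its two coordinate marginals are invariant measures on $X$, hence both equal $\mu$ by unique ergodicity. Consequently the full-$\mu$-measure set $\{x:x=\phi(\pi x)\}$ pulls back, under each coordinate projection, to a full-$\lambda$-measure subset of $W$, so $\lambda$-a.e.\ $(x,x')\in W$ satisfies $x=\phi(\pi x)$, $x'=\phi(\pi x')$ and $\pi x=\pi x'$, which forces $x=x'$. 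Thus $\lambda=\mu_\Delta$, and since $\lambda$ was arbitrary, $(W,T\times T)$ is uniquely ergodic.

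For the converse, suppose $\pi:(X,T)\to(Z,S)$ is a topological factor map with $(W,T\times T)$ uniquely ergodic, say with unique invariant measure $\lambda$. The closed invariant copy of $(X,T)$ inside $W$, namely $\Delta_X$, carries some invariant measure by Krylov--Bogolyubov, which must then be $\lambda$; so $\lambda=\mu_\Delta$ for some invariant $\mu$ on $X$. Any invariant $\mu'$ on $X$ would give $\mu'_\Delta=\lambda=\mu_\Delta$, hence $\mu'=\mu$, so $(X,T)$ (and hence its factor $(Z,S)$) is uniquely ergodic. Now put $\nu=\pi_*\mu$ and disintegrate $\mu=\int_Z\mu_z\,d\nu(z)$ over $\nu$; the relatively independent self-joining $\mu\underset{\nu}{\times}\mu=\int_Z(\mu_z\times\mu_z)\,d\nu(z)$ is an invariant probability measure supported on $W$, hence equals $\lambda=\mu_\Delta$. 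This forces $\mu_z$ to be a point mass for $\nu$-a.e.\ $z$, which is precisely the assertion that $\pi$ is a measure-theoretic isomorphism of $(X,\mu,T)$ with $(Z,\nu,S)$; so $(X,T)$ is an isomorphic extension of $(Z,S)$.

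The proposition is thus essentially a bookkeeping argument once one has the two classical facts it rests on: the existence of the measurable section $\phi$ inverting $\pi$ off a null set (forward direction), and the equivalence ``$\pi$ is a measure isomorphism $\iff$ the conditional measures $\mu_z$ are Dirac $\nu$-a.e.\ $\iff$ $\mu\underset{\nu}{\times}\mu$ is carried by the diagonal'' (converse). Neither is a genuine obstacle; the only place that asks for a little care is the passage from full-$\mu$-measure statements to full-$\lambda$-measure statements in the forward direction, which is immediate from the identification of both marginals of $\lambda$ with $\mu$.
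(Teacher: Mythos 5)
Your proof is correct. The converse direction ($W$ uniquely ergodic $\Rightarrow$ isomorphic) is essentially the paper's argument: both exhibit the two canonical invariant measures $\mu_\Delta$ and $\mu\underset{\nu}{\times}\mu$ on $W$, equate them by unique ergodicity, and deduce that $\nu$-a.e.\ $\mu_z$ is a point mass (your terse step here is fine: $\mu_z\times\mu_z$ giving full mass to the diagonal forces $\mu_z$ to have an atom of full mass; the paper routes the same conclusion through uniqueness of disintegration). Where you genuinely diverge is the forward direction. The paper disintegrates an arbitrary invariant $\lambda$ on $W$ over $\nu$, observes that the integrated fiber marginals are invariant measures on $X$, and uses uniqueness of disintegration to obtain $\lambda_z=\delta_{(\varphi(z),\varphi(z))}$ for $\nu$-a.e.\ $z$. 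You instead argue globally: both coordinate marginals of $\lambda$ are invariant, hence equal $\mu$, so the full-$\mu$-measure set $\{x: x=\phi(\pi x)\}$ is seen by both marginals, and combined with $\pi x=\pi x'$ on $W$ this pins $\lambda$ to the diagonal, giving $\lambda=\mu_\Delta$ directly. Your route is shorter and avoids disintegration (and its uniqueness) in that direction altogether; the paper's route gives, as a by-product, the explicit fiberwise form of $\lambda$, which is more information than is needed to conclude uniqueness. Your derivation of unique ergodicity of $(X,T)$ in the converse (via diagonal lifts of invariant measures, rather than via $X$ being a factor of $W$) is only a cosmetic variant.
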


\begin{proof}
$(\Leftarrow)$:\ First observe that, 
since $(X,T)$ is a factor of $(W,T)$, it is uniquely ergodic. 
As usual, we will denote the invariant measure on $X$ by $\mu$ and the one on $Z$ by $\nu$.
Next observe that on $W$ there are always two obvious $(T\times T)$-invariant measures,
namely the diagonal measure $\mu_\Del = J_*(\mu)$, where $J : X \to W$ is the map
$J(x) = (x,x)$, and the relative product measure $\mu\underset{\nu}{\times} \mu$,
which is obtained as follows:
Let $\mu = \int \mu_z \, d\nu(z)$ be the disintegration of $\mu$ over $\nu$, then
$$
\mu\underset{\nu}{\times} \mu
: = \int (\mu_z \times \mu_z) \, d\nu(z).
$$
Now the fact that $(W,T)$ is uniquely ergodic implies that $\mu_\Del = 
\mu\underset{\nu}{\times} \mu$, i.e. disintegrating the left hand side over $\nu$, we get
$$
\int (\mu_\Del)_z \, d\nu(z) =  \int (\mu_z \times \mu_z) \, d\nu(z).
$$
By uniqueness of the disintegration, we have 
$(\mu_\Del)_z = \mu_z\times\mu_z$ 
for $\nu$-a.e. $z$.

Clearly, $\nu$-a.e. $(\mu_\Del)_z$ is supported on the diagonal set $\Del_z = \{(x,x):x\in\pi^{-1}(z)\}$,
while the only measures on $\pi^{-1}(z)$ whose Cartesian square is supported by this diagonal set are point masses. 
We deduce that $\nu$-a.e. $\mu_z$ is a point mass.
Denoting by $\varphi(z)\in \pi^{-1}(z)$ the atom of $\mu_z$, we easily see that $\varphi:Z\to X$
is a measure-theoretic isomorphism whose inverse (wherever defined) coincides with $\pi$.

$(\imp)$:\ 
We now assume that \xt\ is an isomorphic extension of \zs, i.e., that there exists a $\nu$-a.e. defined measurable bijection $\varphi:Z\to X$, whose inverse coincides with $\pi$ (i.e., $\varphi(z)\in\pi^{-1}(z)$, $\nu$-a.s.), and such that $\mu = \varphi_*(\nu)$ is a unique \im\ on $X$.
Clearly, the disintegration of $\mu$ over $\nu$ is into point masses on the graph of $\varphi$:
$$
\mu =\int\delta_{\varphi(z)}\,d\nu(z).
$$

Let now $\lambda$ be a $T\times T$-\im\ on $W$. It disintegrates over $\nu$ as
$$
\lambda = \int\lambda_z\,d\nu(z),
$$
where $\nu$-a.e. $\lambda_z$ is supported by the fiber of $z$ in $W$, i.e., by $\pi^{-1}(z)\times\pi^{-1}(z)$. Letting $\lambda_{i,z}$ $(i = 1,2)$ denote
the marginals of $\lambda_z$ on the two copies of $\pi^{-1}(z)$, respectively, we
can see that both measures $\int\lambda_{i,z}\,d\nu(z)$ are \im s on $X$. So, both of them must
equal $\mu$ and, by uniqueness of the disintegration, for $\nu$-a.e. $z$ we have 
$$
\lambda_{1,z} = \lambda_{2,z} = \delta_{\varphi(z)},
$$
hence $\lambda_z = \delta_{\varphi(z)}\times \delta_{\varphi(z)} = \delta_{(\varphi(z),\varphi(z))}$.
This proves uniqueness of $\lambda$.
\end{proof}

\br
\section{A generic extension in $\overline\cob(\Gcal)$ is isomorphic}
Our purpose in this section is to show the existence of isomorphic extensions which are not almost one-to-one. Given a strictly ergodic \tl\ system \zs, we will investigate 
a certain associated class of skew product extensions,
with an appropriate fiber space $Y$.

Let $Y$ be a (sufficiently rich) compact metric space, and let $\Gcal$ be a
closed subgroup of the group of all homeomorphisms of $Y$, equipped with the uniform metric 
(this topology makes $\Gcal$ a Polish topological group). Let $C(Z,\Gcal)$ be the family of all continuous maps $G:Z\to\Gcal$, which we will call \emph{cocycles}. We will write $G_z$ (rather than $G(z)$) to denote the homeomorphism of $Y$ associated to $z$, while $G_z(y)$ will denote its value at $y\in Y$. Equipped with the uniform topology, $C(Z,\Gcal)$ is again a Polish group, with multiplication and inverse defined poinwise:
$(GH)_z = G_zH_z$, $(G^{-1})_z = (G_z)^{-1}$. 

From a cocycle $G\in C(Z,\Gcal)$ we can create a \emph{coboundary} $G_S^{-1}G$, where $G_S$ is defined by $(G_S)_z = G_{Sz}$ (note that $(G_S)^{-1} = (G^{-1})_S$ hence one can skip the parentheses). 
Thus we have $(G_S^{-1}G)_z(y) = G_{Sz}^{-1}(G_z(y))$.
Clearly, the coboundary still belongs to $C(Z,\Gcal)$. The collection of all coboundaries obtained in this manner will be denoted by $\cob(\Gcal)$. In general, this is neither a subgroup nor a closed subset of $C(Z,\Gcal)$, 
so we will work with the closure $\overline\cob(\Gcal)$, which is just a Polish space (enough to use category arguments).
\smallskip

With each cocycle $G\in C(Z,\Gcal)$ we associate a \emph{skew product extension} of $(Z,S)$ defined on
the product space $X = Z \times Y$ by:
$$
T_G(z,y) = (Sz, G_z(y)).
$$

We are now in a position to formulate the main theorem of this section:
\begin{thm}\label{main}
Let $(Z,S)$ be an (infinite) strictly ergodic \tl\ \ds, let $\Gcal$ be a pathwise connected subgroup of the group of all homeomorphisms of $Y$, with the following 
 property:
 \begin{quote}
(A) For every nonempty open set $V$ in $Y$ and $\ep > 0$ there are homeomorphisms $h_1, h_2, \dots, h_M$ in 
$\Gcal$ such that
$$
\frac{1}{M} \sum_{j=1}^M \ch_{h_j(Y \setminus V)}(y) \le \ep,
$$
for all $y \in Y$ ($\ch_F$ denotes the characteristic function of a set $F$).
\end{quote} 
Then, for a generic (i.e., for a member of a residual subset)
cocycle $G\in \overline\cob(\Gcal)$, the corresponding extension $(X,T_G)$ of \zs\ is both minimal and isomorphic.
\end{thm}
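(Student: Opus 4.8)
The plan is a Baire category argument in the Polish space $\mathcal{P}=\overline{\cob}(\Gcal)$. The goal is to exhibit two residual subsets of $\mathcal{P}$: the set $\mathcal{M}$ of cocycles $G$ for which $(X,T_G)$ is minimal, and the set $\mathcal{I}$ of cocycles $G$ for which $(X,T_G)$ is an isomorphic extension of $(Z,S)$; then $\mathcal{M}\cap\mathcal{I}$ is residual, and any $G$ in it proves the theorem. By Proposition~\ref{iff}, $G\in\mathcal{I}$ if and only if the associated relative product $(W_G,T_G\times T_G)$ is uniquely ergodic, where $W_G=X\underset{Z}{\times}X$ is canonically identified with $Z\times Y\times Y$ carrying the diagonal skew action $(z,y,y')\mapsto(Sz,G_z(y),G_z(y'))$ --- itself a skew product over $(Z,S)$, with cocycle $z\mapsto G_z\times G_z$. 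So the problem splits into showing that $\mathcal{M}$ and $\mathcal{I}$ are each dense $G_\delta$.

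Both sets are $G_\delta$. The assignments $G\mapsto T_G$ and $G\mapsto T_G\times T_G$ (and all their iterates) are continuous from $\mathcal{P}$ into $\Homeo(X)$, resp.\ $\Homeo(W)$, in the uniform topologies. Fixing a countable base $\{U_i\}$ of $X$, a Lebesgue-number argument shows that $\{G:\bigcup_{n=0}^{N}T_G^{-n}U_i=X\}$ is open in $\mathcal{P}$ for every $i,N$, so $\mathcal{M}=\bigcap_i\bigcup_N\{G:\bigcup_{n=0}^{N}T_G^{-n}U_i=X\}$ is $G_\delta$. For $\mathcal{I}$: since weak-$*$ limits of $T_G\times T_G$-invariant measures are again invariant, the map sending $G$ to the set of $T_G\times T_G$-invariant probability measures on $W$ varies upper semicontinuously with $G$; hence its diameter (in a fixed metric for the weak-$*$ topology) is an upper semicontinuous function of $G$, and $\mathcal{I}=\{G:\text{diameter}=0\}=\bigcap_k\{G:\text{diameter}<1/k\}$ is $G_\delta$.

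It remains to prove density, and this is the heart of the matter. Coboundaries are dense in $\mathcal{P}$, and for a coboundary $G^0=G_S^{-1}G$ the fibered homeomorphism $(z,y)\mapsto(z,G_z(y))$ conjugates $(X,T_{G^0})$ over $(Z,S)$ to the product $(Z\times Y,S\times\mathrm{id}_Y)$, carries $\cob(\Gcal)$ (hence $\overline{\cob}(\Gcal)$) into itself, and preserves both properties at issue; so it suffices to approximate, within $\mathcal{P}$ and to within a prescribed $\ep$, the \emph{constant} cocycle $z\mapsto\mathrm{id}_Y$ by cocycles meeting finitely many of the relevant "test conditions". The perturbation is assembled from three ingredients. (i) Pathwise connectedness of $\Gcal$ lets any prescribed $h\in\Gcal$ be written $h=g_{L}\cdots g_1$ with each $g_\ell$ within $\ep$ of the identity, so any "maneuver" can be executed one $\ep$-step at a time. (ii) Strict ergodicity (hence aperiodicity) of $(Z,S)$ furnishes arbitrarily tall Rokhlin towers along whose levels these $\ep$-steps are scheduled; a standard smoothing makes the result a genuine element of $C(Z,\Gcal)$, and by driving the error set and the scheduling discontinuities to zero through a further limit one keeps it in $\overline{\cob}(\Gcal)$. (iii) Hypothesis~(A) provides, for every nonempty open $V\subseteq Y$ and every $\eta>0$, homeomorphisms $h_1^V,\dots,h_M^V$ with $\frac1M\#\{j:h_j^{-1}(y)\notin V\}\le\eta$ for every $y\in Y$; scheduling the maneuvers realizing $(h_1^V)^{-1},\dots,(h_M^V)^{-1}$ consecutively along a long orbit segment forces the $Y$-coordinate of the $T_{\tilde G}$-orbit of \emph{every} point $(z,y)$ into $V$ at $(1-\eta)$-almost-all of $M$ prescribed times, uniformly in $(z,y)$. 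Running (iii) successively over the members of a finite $\ep$-net of $Y$ (concatenating the corresponding blocks of maneuvers) produces $\tilde G\in\mathcal{P}$, $\ep$-close to the constant cocycle, all of whose $T_{\tilde G}$-orbits are $\ep$-dense in $X$ (the $Z$-coordinate being dense by minimality of $(Z,S)$); letting $\ep\to0$ through the countable intersection defining $\mathcal{M}$ gives density of $\mathcal{M}$. For density of $\mathcal{I}$ one runs the same construction on the diagonal skew product on $W=Z\times Y\times Y$: because the cocycle acts diagonally, a single block of maneuvers chosen via (A) for an open set $V$ simultaneously drives \emph{both} $Y$-coordinates of an arbitrary point of $W$ into $V$ at $(1-2\eta)$-almost-all prescribed times; scheduling such blocks for a sufficiently fine partition of $Y$ into small open sets --- and, within the same perturbation, also making the $X$-ergodic averages of the relevant test functions uniformly near a constant $\mu$ --- forces the $W$-ergodic averages of the chosen $f\in C(W)$ uniformly within tolerance of the single measure $(\mathrm{diag})_*\mu$, so the diameter of the invariant-measure set of $(W_{\tilde G},T_{\tilde G}\times T_{\tilde G})$ drops below the prescribed threshold.

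Consequently $\mathcal{M}$ and $\mathcal{I}$ are dense $G_\delta$ subsets of the Polish space $\mathcal{P}$, so $\mathcal{M}\cap\mathcal{I}$ is residual; by Proposition~\ref{iff}, every $G$ in it yields an extension $(X,T_G)$ of $(Z,S)$ that is simultaneously minimal and isomorphic. The main obstacle is the density step, and within it the clause asserting that the perturbed cocycle stays in $\overline{\cob}(\Gcal)$: one must reconcile the purely combinatorial scheduling of maneuvers along Rokhlin towers with the requirement that $\tilde G$ be a \emph{continuous} $\Gcal$-valued cocycle realized as a genuine limit of coboundaries, and one must arrange the maneuvers so that the uniform-in-the-fiber estimates coming from hypothesis~(A) are valid on the relative product $W$ --- not merely on $X$ --- all while keeping the $\ep$-bookkeeping consistent across the finitely many test conditions handled at once.
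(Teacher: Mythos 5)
Your overall architecture coincides with the paper's: reduce ``isomorphic'' to unique ergodicity of the relative product $\widetilde X=Z\times Y\times Y$ via Proposition \ref{iff}, write the target property as a countable intersection of open subsets of $\overline\cob(\Gcal)$, and prove density after conjugating away a given coboundary so that it suffices to approximate the identity cocycle, with hypothesis (A) exploited through the diagonal action (one application controls both $Y$-coordinates at the cost of a factor $2$). Your $G_\del$ step is fine (your upper semicontinuous ``diameter of the set of invariant measures'' plays the role of the paper's open sets $E_{f,\ep}$ of cocycles whose ergodic averages are uniformly near a constant), and for genericity of minimality the paper does not argue at all but simply cites \cite[Theorems 1, 2]{GW-79}. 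The problem is exactly where you yourself flag it: the density construction is not carried out, and as sketched it has two genuine gaps.

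First, your perturbation --- maneuvers scheduled along Rokhlin towers, then ``smoothed,'' then passed through ``a further limit'' --- is never shown to lie in $\overline\cob(\Gcal)$; nothing in that scheduling makes it a coboundary or a limit of coboundaries, and genericity within all of $C(Z,\Gcal)$ is not what the theorem asserts. The paper kills this issue structurally: the approximant to $\Id$ is itself a coboundary $G=H_S^{-1}H$ with $H_z=h_{\theta(z)}$, where $t\mapsto h_t$ is the path produced from (A) and pathwise connectedness (Lemma \ref{la}), and $\theta:Z\to[0,1]$ is continuous, built on a Kakutani--Rokhlin tower so that it pushes the conditional measure $\nu|K$ onto Lebesgue measure and, being an $N$-step ergodic average, varies slowly along orbits (giving $d(G,\Id)<\del$); since conjugating a coboundary by a cocycle yields a coboundary, every approximant stays in $\cob(\Gcal)$. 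Second, unique ergodicity of $\widetilde X$ requires the ergodic averages of a test function $f$ to be uniformly close, over \emph{all} points of $Z\times Y\times Y$, to a single constant; your orbit-scheduling sketch asserts this (``within tolerance of the diagonal push-forward of $\mu$'') but supplies no mechanism, and the limiting constant cannot be identified that way. The paper's mechanism is that the $n$-step cocycle of $H_S^{-1}H$ telescopes, so after the conjugation $\psi_H$ the averages become averages of $f(S^kz,H^{-1}_{S^kz}(y_1),H^{-1}_{S^kz}(y_2))$; strict ergodicity of $(Z,S)$, applied to the compact family $\{f_{y_1,y_2}\}$, makes these converge uniformly to $\int f(z,H_z^{-1}(y_1),H_z^{-1}(y_2))\,d\nu(z)$, and the distribution of $\theta$ together with Lemma \ref{la} shows this integral is within $\ep$ of the one constant $c=\int f(z,v,v)\,d\nu(z)$ --- note that a \emph{single} open set $V$ of small oscillation for $f$ suffices, not a fine partition of $Y$. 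Without analogues of these two devices (coboundary form of the perturbation; conversion of uniform orbit averages into a spatial integral) your density step does not go through, so the proposal is an outline of the right strategy rather than a proof.
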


\begin{rems}
\begin{enumerate}
	\item The assumptions imply that $Y$ contains nontrivial pathwise connected components, hence is uncountable.
	\item The most obvious example of a space $Y$ and a group $\Gcal$ satisfying these assumptions is the
	circle and the group of all orientation-preserving circle homeomorphisms. 
	It is easy to see that the unit interval, even with its entire group 
	of orientation preserving homeomorphisms, does not satisfy the condition ($A$).
\item 
Clearly, the generic skew product, whose existence is claimed in this theorem, is an isomorphic but not an almost one-to-one extension of \zs. 
There exists a function $\varphi: Z \to Y$ such that $z\mapsto (z,\varphi(z))$ is 
an isomorphism from \zs\ to \xt, and the unique \im\ $\mu$ on $X$ is supported by the graph of $\varphi$. By minimality, this graph must be dense in $X = Z \times Y$.
\item 
Applied to minimal equicontinous systems \zs, Theorem \ref{main} answers negatively (although ineffectively) the Question \ref{que1} and hence also \ref{que}. 
\end{enumerate}
\end{rems}

\begin{proof} We need to demonstrate two claims: genericity of minimality and genericity of being 
an isomorphic extension. Of course, the intersection of two generic properties is still generic.
Now, the first claim is proven in \cite[Theorem 1]{GW-79} (notice that the condition $(A)$ implies that $\Gcal$ acts minimally on $Y$). Alternatively, one can use \cite[Theorem 2]{GW-79} (strict ergodicity includes minimality). Thus, it remains to show that isomorphic extensions form a 
residual subset of $\overline\cob(\Gcal)$.
\smallskip

We begin with an outline of the proof. We will consider the relative product 
$X \underset{Z}{\times}X = \{(x,x') \in X \times X : \pi(x) = \pi(x')\}$, which in our case --- where
$X = Z \times Y$ is a product space --- is homeomorphic to
$\widetilde{X} : = Z \times Y \times Y$.
Then, given $G\in C(Z,\Gcal)$, 
we define the corresponding cocycle extension as the map:
$$
\widetilde T_G(z,y_1,y_2) = (Sz, G_z(y_1), G_z(y_2)).
$$
We will show that for a generic $G\in\overline\cob(\Gcal)$ this defines a uniquely ergodic
map on $\widetilde{X}$. 
This will be done by a modification of the proof of \cite[Theorem 2]{GW-79}, and,
in view of Proposition \ref{iff} this will complete our proof.
\medskip

We begin with a lemma, where the technical assumptions made on $\Gcal$ are essential.

\begin{lem}\label{la}
Let $V$ be a nonempty open subset of $Y$, and fix $\ga>0$. There exists a continuous map $t \mapsto h_t$ 
from $[0,1]$ into $\Gcal$ such that for all $y \in Y$, with $\la$ denoting the Lebesgue measure 
on $[0,1]$, we have
$$
\la \{t \in [0,1] : h^{-1}_t(y) \notin V\} < \ga.
$$
\end{lem}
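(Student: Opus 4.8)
The plan is to deduce the lemma directly from hypothesis (A) together with pathwise connectedness of $\Gcal$, the point being that (A) gives a \emph{finite family} of homeomorphisms with good pointwise behaviour on average, while path connectedness lets us splice them into a single continuous path at negligible cost. First observe that for $h\in\Gcal$ and $y\in Y$ we have $h^{-1}(y)\notin V$ exactly when $y\in h(Y\setminus V)$, so $\ch_{h(Y\setminus V)}(y)=\ch_{\{h^{-1}(y)\notin V\}}(y)$; thus condition (A) says that for any $\ep>0$ there exist $h_1,\dots,h_M\in\Gcal$ with
$$
\#\{\,j\in\{1,\dots,M\}: h_j^{-1}(y)\notin V\,\}\le \ep M\qquad\text{for every }y\in Y .
$$
I would apply this with $\ep:=\ga/2$, fixing such $h_1,\dots,h_M$.

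Next I would build the path $t\mapsto h_t$ so that it "rests" at each $h_j$ on a long subinterval and "races" between consecutive $h_j$'s on short ones. Choose $\eta>0$ with $M\eta<\ga/2$, partition $[0,1]$ into $M$ consecutive intervals $I_1,\dots,I_M$ of length $1/M$, and inside each $I_j$ keep a core subinterval $J_j$ obtained by deleting from $I_j$ pieces of total length $\eta$ near its endpoints; then $\sum_j\la(I_j\setminus J_j)=M\eta<\ga/2$. On each core $J_j$ set $h_t\equiv h_j$. On each connecting piece joining the core of $I_j$ to the core of $I_{j+1}$ insert a path in $\Gcal$ from $h_j$ to $h_{j+1}$ (which exists since $\Gcal$ is pathwise connected), reparametrized to that short subinterval; treat the two extreme pieces by constant extension (one may just as well prepend a path from $\id$ to $h_1$ there if a normalization $h_0=\id$ is wanted). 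The resulting $t\mapsto h_t$ is a finite concatenation of continuous pieces with matching endpoints, hence continuous into $\Gcal$ with the uniform metric.

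Finally I would estimate, for an arbitrary fixed $y\in Y$, the bad set $B_y:=\{t\in[0,1]:h_t^{-1}(y)\notin V\}$, splitting it according to whether $t$ lies in a core $J_j$ or in a connecting piece. On the connecting pieces I use only the trivial bound $\ch_{\{h_t^{-1}(y)\notin V\}}\le 1$, so their contribution is at most $M\eta<\ga/2$. On the cores, $h_t$ is the constant $h_j$, so $t\in J_j\cap B_y$ forces $h_j^{-1}(y)\notin V$, whence the total core contribution is at most $\frac1M\,\#\{j:h_j^{-1}(y)\notin V\}\le\frac1M\cdot\ep M=\ga/2$. Adding the two estimates gives $\la(B_y)<\ga$, uniformly in $y$, which is the assertion.

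The only genuinely delicate point is the behaviour on the transition intervals: a path between two elements of $\Gcal$ may well pass through homeomorphisms that pile a large part of $Y\setminus V$ onto the given $y$, and there is no pointwise control available there. The device that resolves this is precisely to let the transitions occupy an arbitrarily small total amount of parameter ($\la$-)measure, so that the crude bound on them is harmless; this is exactly the step where pathwise connectedness of $\Gcal$ (rather than mere connectedness) is used.
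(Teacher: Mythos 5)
Your proposal is correct and follows essentially the same route as the paper: apply condition (A) with $\ep=\ga/2$, make the path constant equal to $h_j$ on the bulk of the $j$-th subinterval of a partition of $[0,1]$ into $M$ equal pieces, use pathwise connectedness of $\Gcal$ only on short transition intervals of total length less than $\ga/2$, and add the two estimates. The observation you single out at the end (no pointwise control on the transitions, compensated by making their total parameter measure small) is exactly the mechanism in the paper's construction.
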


\begin{proof}
By assumption, there exist $\tilde{h}_1, \tilde{h}_2, \dots,\tilde{h}_M \in \Gcal$
satisfying the condition 
$$
\frac1M \sum_{i=1}^M \ch_{\tilde{h}_i(Y\setminus V)}(y) \le \frac{\ga}{2}.
$$

Define $h_t = \tilde{h}_i$ for $t\in[\frac{i - 1}M +  \frac\gamma{4M}, \frac iM - \frac\gamma{4M}] \ (1  \le  i  \le M)$ 
and extend the map $t \mapsto h_t$, continuously to all of $[0,1]$ (here we use pathwise connectedness of $\Gcal$). Now consider two cases for $y\in Y$:
\begin{itemize}
	\item If $y \not \in \bigcup_{i=1}^M \tilde{h}_i(Y \setminus V)$, then, for every
$t \in \bigcup_{i=1}^M [\frac{i - 1}M +  \frac\gamma{4M}, \frac iM - \frac\gamma{4M}]$, we have
$h^{-1}_t(y) \in V$ and
$$
\la \{t \in I : h^{-1}_t(y) \notin V\}  \le 2M \frac{\gamma}{4M} = \frac{\ga}{2}.
$$

\item Otherwise there is a nonempty set $F\subset\{1,\dots,M\}$ with 
$y \in \bigcap_{i\in F} \tilde{h}_i(Y \setminus V)$, and then
$$
\frac {|F|}M = \frac1M \sum_{i=1}^M \ch_{\tilde h_i(Y\setminus V)}(y) \le \frac{\ga}2,
$$
implying 
$$
\la \{t \in I : h^{-1}_t(y) \notin V\}  \le \frac{|F|}{M} + 2M \frac{\gamma}{4M} \le \ga.
$$
\end{itemize}
\end{proof}

We proceed with the main proof. For $f \in C(\widetilde X,[0,1])$ and $\ep > 0$ we denote
$$
E_{f,\ep} = \Bigl\{G \in C(Z,\Gcal) : \exists c\in \R,\exists n\ge 1, \Bigl\| \frac{1}{n+1} 
\sum_{k =0}^n f(\widetilde T_G^k(z,y_1,y_2)) - c \, \Bigr\| < \ep\Bigr\}.
$$
Clearly $E_{f,\ep}$ is open in $C(Z,\Gcal)$ and it is easy to check that $\Rcal = \bigcap_{i,j\in\N} E_{f_i,1/j}$, where $\{f_i\}$ is a countable dense subset of $C(\widetilde X,[0,1])$, consists precisely 
of the cocycles $G\in C(Z,\Gcal)$ for which $(\widetilde X,\widetilde T_G)$ is uniquely ergodic\footnote{Formally, to guarantee unique ergodicity, in the condition defining $E_{f,\ep}$, in place of $\exists n\ge 1$ we should demand $\exists n_0\ge 1\, \forall n\ge n_0$. However, it is not hard to see, that if one such $n$ exists, then an $n_0$, satisfying the correct condition, with a slightly larger $\ep$, can also be found 
(much larger than~$n$).}. Thus, all we need to show is that for an arbitrary $f \in C(\widetilde X,[0,1])$ and $\ep >0$, $E_{f,\ep}\cap \overline\cob(\Gcal)$ is dense in $\overline\cob(\Gcal)$, i.e., that $\cob(\Gcal)\subset\overline{E_{f,\ep}}$. 

Let $H_S^{-1} H$ be a coboundary. We have 
\begin{equation}\label{totu}
H_S^{-1} H\in\overline{E_{f,\ep}} \iff \Id\in H_S \overline{E_{f,\ep}} H^{-1} = \overline{H_S E_{f,\ep} H^{-1}},
\end{equation}
where $\Id\in C(Z,\Gcal)$ assigns to each $z\in Z$ the identity map $\Id_Y$ on $Y$ (the last equality
follows from the fact that left and right multiplications by a fixed cocycle are homeomorphisms of
$C(Z,\Gcal)$). We need to identify the latter set.

Define $\psi_H(z,y_1,y_2)=(z,H_z^{-1}(y_1),H_z^{-1}(y_2))$. This is a self-homeo\-morphism of 
$\widetilde X$. Thus, we have the obvious equality of uniform norms:
$$
\Bigl\| \frac{1}{n+1}\sum_{k =0}^n f(\widetilde T_G^k(z,y_1,y_2)) - c \, \Bigr\| = \Bigl\| \frac{1}{n+1} \sum_{k =0}^n f(\widetilde T_G^k(\psi_H(z,y_1,y_2))) - c \, \Bigr\|,
$$
i.e., in the definition of $E_{f,\ep}$, we can replace the argument $(z,y_1,y_2)$ by $\psi_H(z,y_1,y_2)$.

A tedious but straightforward computation shows that 
$$
\widetilde T^k_{H_S^{-1}GH}(\psi_H(z,y_1,y_2)) = \psi_H(\widetilde T^k_G(z,y_1,y_2)).
$$
Combined with the preceding observation, this easily implies that 
$$
H_S^{-1}GH \in E_{f,\ep} \Leftrightarrow G\in E_{f\circ\psi_H,\ep}, \ {\text{ i.e.,}} \quad   
H_SE_{f,\ep}H^{-1} = E_{f\circ\psi_H,\ep}
$$ 
(whence the closures are equal).

The equivalence \eqref{totu} now means that for the required inclusion $\cob(\Gcal)\subset\overline{E_{f,\ep}}$ it suffices to show that $\id\in \overline{E_{f\circ\psi_H,\ep}}$,
for any $\ep>0$, any function $f\in C(\widetilde X, [0,1])$,  and any cocycle $H$. 
Since the family $\{f\circ\psi_H: f\in C(\widetilde X,[0,1]), H\in C(Z,\Gcal)\}$ equals $C(\widetilde X,[0,1])$, what we need is that for any $f\in C(\widetilde X,[0,1])$ and $\ep>0$, we have $\Id \in \overline{E_{f,\ep}}$. We will show a stronger fact: $\id$ can be uniformly approximated by 
{\sl coboundaries} from $E_{f,\ep}$. We formulate this as a lemma:

\begin{lem}\label{sig}
Given $f\in C(\widetilde X,[0,1])$, $\ep>0$ and $\del > 0$, there exists a coboundary $G=H_S^{-1}H \in E_{f,\ep}$ satisfying $d(G,\Id) < \del$, (here $d$ denotes the uniform distance on $C(Z,\Gcal)$).
\end{lem}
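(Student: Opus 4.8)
The plan is to exploit the telescoping identity for coboundary skew products together with the unique ergodicity of $(Z,S)$. For $G=H_S^{-1}H$ one gets, exactly as in the computation preceding this lemma, $\widetilde T_G^k(z,y_1,y_2)=\bigl(S^kz,\ H_{S^kz}^{-1}H_z(y_1),\ H_{S^kz}^{-1}H_z(y_2)\bigr)$, so that with $u_i:=H_z(y_i)$,
$$\frac1{n+1}\sum_{k=0}^n f\bigl(\widetilde T_G^k(z,y_1,y_2)\bigr)=\frac1{n+1}\sum_{k=0}^n \Psi_{u_1,u_2}(S^kz),\qquad \Psi_{u_1,u_2}(w):=f\bigl(w,H_w^{-1}(u_1),H_w^{-1}(u_2)\bigr).$$
For \emph{any} continuous cocycle $H$ the family $\{\Psi_{u_1,u_2}:(u_1,u_2)\in Y\times Y\}$ is a compact, hence equicontinuous, subset of $C(Z,[0,1])$, so unique ergodicity of $(Z,S)$ produces an $n\ge 1$ with $\bigl\|\frac1{n+1}\sum_{k=0}^n\Psi_{u_1,u_2}\circ S^k-\int_Z\Psi_{u_1,u_2}\,d\nu\bigr\|_\infty<\ep/3$ simultaneously for all $(u_1,u_2)$. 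Thus it is enough to build a coboundary $G=H_S^{-1}H$ with $d(G,\Id)<\del$ together with a constant $c$ such that $\bigl|\int_Z f\bigl(w,H_w^{-1}(u_1),H_w^{-1}(u_2)\bigr)\,d\nu(w)-c\bigr|<\tfrac23\ep$ for all $u_1,u_2\in Y$.

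Next I reduce this integral condition to a single requirement on $H$. Fix a nonempty open set $V\subset Y$ of diameter so small that $f$ oscillates by less than $\ep/6$ whenever its two $Y$-coordinates both stay inside $V$; pick $v^*\in V$ and set $c:=\int_Z f(w,v^*,v^*)\,d\nu(w)$. Splitting $Z$ according to whether $H_w^{-1}(u_1)$ and $H_w^{-1}(u_2)$ both lie in $V$, and bounding $f$ by $1$ on the remaining set, one obtains
$$\Bigl|\int_Z f\bigl(w,H_w^{-1}(u_1),H_w^{-1}(u_2)\bigr)\,d\nu(w)-c\Bigr|\ \le\ \frac\ep6+4\,\max_{u\in Y}\nu\bigl(\{w\in Z:H_w^{-1}(u)\notin V\}\bigr).$$
So everything reduces to constructing a continuous $H:Z\to\Gcal$ which (i) is ``slowly varying along $S$-orbits'' — precisely, $d_\Gcal(H_{Sw},H_w)$ small enough (using continuity of multiplication and inversion in $\Gcal$) that the coboundary $H_S^{-1}H$ is $\del$-close to $\Id$ — and which (ii) ``smears $\nu$ into $V$'', i.e. $\nu(\{w:H_w^{-1}(u)\notin V\})<\ga$ for every $u\in Y$, with $\ga:=\ep/24$.

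To build such an $H$ I would first use the elementary fact that $(Z,S)$ admits, for every $\eta>0$, a continuous $\tau:Z\to[0,1]$ with $\sup_w|\tau(Sw)-\tau(w)|<\eta$ whose push-forward $\ka:=\tau_*\nu$ is non-atomic (for instance $\tau=\sum_{k\in\Z}c_k\,g\circ S^k$ for a suitable continuous $g$ and positive weights $c_k$ with $\sum_k c_k<\infty$ and $\sum_k|c_{k-1}-c_k|<\eta$; the $S$-oscillation is then $\le\sum_k|c_{k-1}-c_k|$). Then I repeat the proof of Lemma~\ref{la} with $\ka$ in place of Lebesgue measure: hypothesis (A) and pathwise connectedness of $\Gcal$ yield a continuous path $t\mapsto h_t$ in $\Gcal$ with $\ka\{t:h_t^{-1}(u)\notin V\}<\ga$ for all $u\in Y$ — the only modification is to place the homeomorphisms $\tilde h_1,\dots,\tilde h_M$ supplied by (A) on consecutive intervals of $\ka$-mass $1/M$ (possible since $\ka$ is non-atomic) and to interpolate continuously on short transition intervals of small $\ka$-mass. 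Finally set $H:=h\circ\tau$. Then $H$ is continuous, $d_\Gcal(H_{Sw},H_w)\le \omega_h\bigl(|\tau(Sw)-\tau(w)|\bigr)<\omega_h(\eta)$ (with $\omega_h$ the modulus of continuity of the path), which is as small as we wish by choosing $\eta$ small, and $\nu(\{w:H_w^{-1}(u)\notin V\})=\ka\{t:h_t^{-1}(u)\notin V\}<\ga$ for all $u$. Choosing $\eta$ small enough that $d(H_S^{-1}H,\Id)<\del$ and assembling the three displayed estimates, the coboundary $G:=H_S^{-1}H$ satisfies $d(G,\Id)<\del$ and $G\in E_{f,\ep}$, proving the lemma.

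The hard part is exactly the third step, and the essential tension there is that the smearing property (ii) must hold \emph{simultaneously for all} $u\in Y$ while (i) forces $H$ to move only a tiny amount along each step of an $S$-orbit — otherwise the coboundary $H_S^{-1}H$ is not close to $\Id$. Hypothesis (A) is precisely what dissolves this tension: it produces inside the connected group $\Gcal$ a single continuous path whose points push every $u\in Y$ into the prescribed small set $V$ for all but a $\ka$-small set of parameters, and composing this path with the slowly-varying coordinate $\tau$ on $Z$ converts it into a cocycle enjoying both features. (This is the ``modification of the proof of \cite[Theorem 2]{GW-79}'' referred to earlier.) Everything else is bookkeeping: the equicontinuity/uniformity argument in the first step and the elementary additive error estimate in the second.
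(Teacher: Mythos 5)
Your first two steps are fine and essentially coincide with the paper's: the telescoping identity for $T^k_{H_S^{-1}H}$, strict ergodicity plus compactness of the family $\{\Psi_{u_1,u_2}\}$ to replace the Birkhoff averages by integrals uniformly, and the choice of a small-oscillation set $V$ with $c=\int f(z,v^*,v^*)\,d\nu(z)$ correctly reduce the lemma to producing a continuous $H:Z\to\Gcal$ with (i) $d(H_S^{-1}H,\Id)<\del$ and (ii) $\nu\{w\in Z: H_w^{-1}(u)\notin V\}<\ga$ for every $u\in Y$. The gap is in your third step, and it is a genuine quantifier-order problem, not bookkeeping. You first fix $\eta$, then choose a slowly varying $\tau$ with $|\tau(Sw)-\tau(w)|<\eta$ and non-atomic $\ka=\tau_*\nu$, then build the path $t\mapsto h_t$ \emph{adapted to} $\ka$ (plateaus of $\ka$-mass $1/M$, transitions on intervals of small $\ka$-mass), and finally claim that $d(H_{Sw}^{-1}H_w,\id_Y)$ is small ``by choosing $\eta$ small''. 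But the threshold that $\eta$ must meet is dictated by the chosen path (the $\eta_0$ with $|s-t|<\eta_0\Rightarrow d(h_s^{-1}h_t,\id_Y)<\del$), and the path depends on $\ka$, hence on $\tau$, hence on $\eta$: shrinking $\eta$ changes $\ka$, changes the path, and changes the threshold again, and nothing in your argument shows this loop closes. It genuinely need not: your transition pieces must sit on intervals of small $\ka$-mass, and if $\ka$ happens to concentrate substantial mass in a tiny neighbourhood of a quantile point, those intervals are forced to be extremely short in length, so the reparametrized connecting paths become arbitrarily steep and the relevant modulus of continuity is not bounded uniformly in $\ka$. (A secondary unproven point: the existence of a slowly varying continuous $\tau$ with non-atomic push-forward is asserted via the series construction, which controls the oscillation but not the non-atomicity of $\tau_*\nu$.)

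The paper resolves exactly this tension by reversing the order: the path $t\mapsto h_t$ is fixed once and for all with respect to \emph{Lebesgue} measure (Lemma \ref{la}), then $\eta$ is chosen from that fixed path, and only afterwards is a map $\theta:Z\to[0,1]$ built enjoying both properties simultaneously: one takes a Rokhlin tower $K,SK,\dots,S^{N^2-1}K$ with $\frac1N<\min\{\frac\eta2,\ga\}$, a continuous surjection $\tilde\theta:K\to[0,1]$ with $\tilde\theta_*(\nu|K)=\la$, extends it to be constant along the columns, and sets $\theta=\frac1N\sum_{i=0}^{N-1}\tilde\theta\circ S^{-i}$, $H_z=h_{\theta(z)}$. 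Constancy along columns means the $N$-step averaging leaves $\theta=\tilde\theta$ unchanged on all but an $O(1/N)$ proportion of levels, so the distribution of $\theta$ stays essentially Lebesgue and property (ii) transfers from Lemma \ref{la}; at the same time the averaging forces $|\theta(Sz)-\theta(z)|\le\frac2N<\eta$ everywhere, giving (i). To salvage your scheme you would need an analogue of this step, i.e.\ a slowly varying $\tau$ whose push-forward is (essentially) a measure fixed \emph{before} $\eta$ is chosen; that construction is precisely the missing core of the proof.
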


\begin{proof}
Observe that the expression in the (modified) definition of $E_{f,\ep}$, 
$$
\frac1{n+1}\sum_{k=0}^n f(\widetilde T_G^k(\psi_H(z,y_1,y_2)) = \frac1{n+1}\sum_{k=0}^n f(S^kz,H^{-1}_{S^kz}(y_1),H^{-1}_{S^kz}(y_2))
$$
equals the ergodic average, under the action of $S$, of the real-valued function (of one variable, with two parameters) $f_{y_1,y_2}\in C(Z,[0,1])$ given by
$$
f_{y_1,y_2}(z) = f(z,H^{-1}_z(y_1),H^{-1}_z(y_2)).
$$
Strict ergodicity of \zs\ implies that if we fix $(y_1,y_2)\in Y^2$, then
these averages tend uniformly over $z\in Z$ to the integral
$$
\int f_{y_1,y_2} \, d\nu(z)
$$
(recall that $\nu$ is the unique invariant measure on $Z$). Since the map $(y_1,y_2)\mapsto f_{y_1,y_2}$ is obviously continuous, the family of functions $\{f_{y_1,y_2}:(y_1,y_2)\in Y^2\}$ is compact 
and thus the above convergence is uniform also over $(y_1,y_2) \in Y^2$. 

So, the condition $G\in E_{f,\ep}$ in the lemma is equivalent to 
\begin{equation}\label{war}
\left | \int f(z, H^{-1}_z(y_1), H^{-1}_z(y_2))\, d\nu(z) - c\, \right| < \ep,
\end{equation}
for some constant $c$ and all $(y_1,y_2) \in Y^2$.

We let $V\subset Y$ be an open set such that 
$$
\sup | f(z, v_1, v_2) - f(z, v'_1,v'_2) | < \frac\ep2,
$$
where the supremum ranges over all $z\in Z$ and $v_1, v_2, v'_1, v'_2 \in V$, and set $\gamma = \frac\ep{16}$. Lemma \ref{la} provides a continuous assignment $t\mapsto h_t$ with the appropriate properties.

We can now proceed with the construction of $H$ (and thus $G$). Let $\eta >0$ be such that 
$$
|t_1 - t_2 | < \eta \implies d(h^{-1}_{t_1}h_{t_2}, \id_Y) < \del
$$ 
and fix an $N \in \N$ with $\frac1N < \min\{\frac\eta2,\ga\}$.
There exists a measurable subset $K$ of $Z$ with $\nu(K)>0$, such that the images
$K, S(K), \dots, S^{N^2 -1}(K)$ are pairwise disjoint while the measure of their union exceeds $1- \ga$.
By regularity, we can assume that $K$ is a closed set, moreover, we can arrange that it is homeomorphic 
to the Cantor set. Let $\tilde{\theta} : K \to [0,1]$ be a continuous surjection for which the push-forward measure 
$\tilde{\theta}_*(\nu|K)$ equals $\la$, where $\nu|K$ denotes the (normalized) conditional measure on $K$ obtained from $\nu$ (such a surjection exists for any continuous probability measure on the Cantor set). Extend $\tilde{\theta}$ to $\bigcup_{i=0}^{N^2 -1} S^i(K)$ by:
$$
\tilde{\theta}(z) = \tilde{\theta}(S^{-i}z) {\text{\ \ if\ \ }} \ z \in S^i(K)\ \ (i=1,\dots,N^2-1)
$$
and then extend it again to a continuous map $\tilde{\theta}: Z \to [0,1]$. Finally, for every $z\in Z$, put
$$
\theta(z) = \frac1N \sum_{i=0}^{N-1} \tilde{\theta}(S^{-i}z)
$$
and define $H\in C(Z,\Gcal)$ by setting $H_z = h_{\theta(z)}$. It remains to verify the required 
properties of $G = H_S^{-1}H$.

We claim that $G\in E_{f,\ep}$, namely, that for every $y_1, y_2 \in Y$, we have
$$
\left | \int f(z, H_z^{-1}(y_1), H_z^{-1}(y_2))\, d\nu(z) - c\,\right | < \ep
$$
(i.e., \eqref{war} holds), where $c = \int f(z, v, v)\,d\nu(z)$, with $v$ being any point selected from the open set $V\subset Y$. 
Indeed,

\begin{align*}
 & \int | f(z, H_z^{-1}(y_1), H_z^{-1}(y_2)) -  f(z,v,v) | \, d\nu(z) \le\\
 & \int_{\bigcup_{i=0}^{N^2-1} S^i (K)}  | f(z, H_z^{-1}(y_1), H_z^{-1}(y_2)) -  f(z,v,v) | \, d\nu(z) +
 2\|f\|\gamma\le \\
 & \int_{\bigcup_{i=0}^{N^2-N-1} S^i(K)}  | f(z, H_z^{-1}(y_1), H_z^{-1}(y_2)) - f(z,v,v)| \, d\nu(z) +2\|f\|(\gamma+N\cdot\nu(K))< \\ 
  & \sum_{i=0}^{N^2-N-1} \int_K  | f(S^iz, H_{S^iz}^{-1}(y_1), H_{S^iz}^{-1}(y_2)) -  f(z,v,v) | \, d\nu(z) + 4\gamma
=  \\
 & \sum_{i=0}^{N^2-N-1} \int_K  | f(S^iz, H_z^{-1}(y_1), H_z^{-1}(y_2)) -  f(z,v,v) | \, d\nu(z) + 4\gamma,   
\end{align*}
(we have used the fact that for $z\in K$, $H_z=H_{Sz} = \cdots = H_{S^{N^2-N-1}z}$, and the inequalities $\nu(K)\le \frac1{N^2}, \frac1N<\gamma$ and $\|f\|\le 1$). Next, consider a point $z\in K$ for which both $h^{-1}_{\theta(z)}(y_1)\in V$ and $h^{-1}_{\theta(z)}(y_2)\in V$. By the choice of $V$, for such $z$, the integrand does not exceed $\frac\ep2$. On the other hand, a point $z\in K$ does not fulfill this condition only when $t=\theta(z)$ is such that either $y_1$ or $y_2$ fall outside $h_t(V)$. The measure $\lambda$ of such $t$'s is at most $2\gamma$ (this is what Lemma \ref{la} yields), i.e., the conditional measure $\nu|K$ of such $z$'s is at most $2\gamma$. We conclude that the last line above is bounded by
$$
(N^2 - N)\nu(K) (\frac\ep2 + 4\gamma) + 4\gamma \le \frac\ep2+8\gamma\le\ep,
$$
as claimed (we have used one more time the fact that $\nu(K) \le \frac1{N^2}$).
\smallskip

Finally, in order to show that $d(H^{-1}_{Sz}H_z,\Id_Y) < \del$, for every $z\in Z$, note that,
because $\theta(z)$ is defined as an $N$-step ergodic average, we have 
$|\theta(Sz) - \theta(z)| \le \frac2N < \eta$, which, by the choice of $\eta$, yields 
$d(H^{-1}_{Sz} H_z, \id_Y) = d(h^{-1}_{\theta(Sz)} h_{\theta(z)}, \id_Y)<\delta$.
This concludes the proof of Lemma \ref{sig}.
\end{proof}
The proof of Theorem \ref{main} is now complete.
\end{proof}

\begin{rmk}
The same proof will show that for a residual set of $G$'s in $\overline\cob(\Gcal)$,
the analogous relative product action on
$Z \times Y \times Y \times \cdots \times Y$ ($k$ times)
is uniquely ergodic, and hence also that the infinite
product will be uniquely ergodic for a residual set of $G$'s.
\end{rmk}

\br

\section{Some concrete examples and related results}

As in \cite{GW-79} we can single out the following concrete examples.
Let $(Y, \Gcal)$ denote $(\mathbb{P}^n, SL(n+1,\R))$ or $(Q, \Gcal)$ where 
$\Gcal$ is the identity path component of $\Hcal(Q)$, the group of homeomorphisms of $Q$. 
Here $\mathbb{P}^n$ and $Q$ denote the $n$-dimensional 
projective space and the Hilbert cube respectively. 
It is easy to check that, in both cases, the conditions in Theorem \ref{main} hold (see \cite{GW-79}). 
Clearly the action of $SL(n + 1,\R)$ on $\mathbb{P}^n$ is (literally) transitive and 
this is also true for the action of $\Gcal$ on $Q$; in particular these actions are minimal. 
Thus for an arbitrary infinite, minimal, uniquely ergodic, metric system 
$(Z, S)$, for a residual set of 
$G$'s in $\overline\cob(\Gcal)$, the corresponding  
homeomorphisms of $X = Z \times Y$, where
$Y = \mathbb{P}^n$  or $Q$, is strictly ergodic. Moreover, 
denoting by $\mu$ the unique invariant measure on $X$ and by $\nu$ the unique invariant
measure on $Z$, by Theorem \ref{main}, the extension map 
$\pi : (X,\mu,T_G) \to (Z,\nu,S)$ is a measure-theoretic isomorphism;
i.e. $\pi$ is an isomorphic extension.
In particular if we let $(Z, S) = (\T, R_\al)$, an irrational rotation of the circle, and 
$(Y, \Gcal)= (\mathbb{P}^1,SL(2, \R))$, then, since $\mathbb{P}^1$ is homeomorphic to $\T$, we can
obtain these minimal systems on the torus $\T^2$.

\br

Recall that a metric dynamical system is called {\em tame} when its enveloping semigroup has 
cardinality $\le 2^{\aleph_0}$. Both $(Z,S) =(\T,R_\al)$ and $(Y,\Gcal) =(\mathbb{P}^n, SL(n+1,\R))$
are tame (for more details see \cite{Ellis-93}, \cite{Ak-98} and \cite[Example 8.31(6)]{GM-14}). 
However, none of the minimal systems $(X,T_G) = (Z \times Y, T_G)$ given by Theorem \ref{main} is tame. 
In fact, a theorem of Huang \cite{Huang}, Kerr \& Li \cite{KL}, and Glasner \cite{Gl-str}
asserts that a tame minimal dynamical system is necessarily {\em almost automorphic},
i.e., an almost one-to-one extension of its maximal equicontinuous factor, and that this extension is isomorphic. More precisely, as quoted from \cite[Theorem 5.1]{Gl-str}, we have:

\begin{thm}\label{almost-auto}
Let $\Ga$ be an Abelian group and $(X,\Ga)$ a metric tame minimal system. 
Then:
\begin{enumerate}
\item
The system $(X,\Ga)$ is almost automorphic. Thus there exist:
\begin{enumerate}
\item
A compact topological group $Y$ with Haar measure $\eta$,
and a group homomorphism
$\kappa:\Ga \to Y$ with dense image.
\item
A homomorphism $\pi:(X,\Ga) \to (Y,\Ga)$,
where the $\Ga$ action on $Y$ is via $\kappa$.
\item
The sets $X_0= \{x\in X: \pi^{-1}(\pi(x)) = \{x\}\}$
and $Y_0=\pi(X_0)$ are dense $G_\del$ subsets of
$X$ and $Y$ respectively.
\end{enumerate}
\item
The system $(X,\Ga)$ is uniquely ergodic with unique invariant measure $\mu$
such that with $\pi_*(\mu)=\eta$, the map $\pi:(X,\mu,\Ga) \to (Y,\eta,\Ga)$ is a
measure-theoretic isomorphism of the corresponding measure preserving
systems.
\end{enumerate}
\end{thm}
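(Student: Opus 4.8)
My plan is to deduce Theorem~\ref{almost-auto} from the theory of Rosenthal compacta and the structure theory of tame minimal systems, following in outline \cite{Huang}, \cite{KL} and \cite{Gl-str}. The first step is to turn the cardinality hypothesis on $E(X)$ into a usable analytic statement: by the Bourgain--Fremlin--Talagrand dichotomy, together with the work of K\"ohler and Glasner--Megrelishvili (see \cite[Ch.~8]{GM-14}), a metrizable system $(X,\Ga)$ is tame if and only if its enveloping semigroup $E(X)$ is a Rosenthal compactum; equivalently, every $p\in E(X)$, viewed as a self-map of $X$, is of Baire class one, and $E(X)$ is Fr\'echet (sequences suffice to compute closures). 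From this I would extract the two elementary facts needed below: a Baire class one self-map of a Polish space has a dense $G_\del$ set of continuity points, and the fibre map $y\mapsto\pi^{-1}(y)$ of any factor map is upper semicontinuous.

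Next I would set up the maximal equicontinuous factor. For a minimal system with $\Ga$ Abelian, $(X_{\mathrm{eq}},\Ga)$ is the maximal group factor: $E(X_{\mathrm{eq}})$ is a compact group $Y$, $\Ga$ acts on it by translations via a homomorphism $\ka:\Ga\to Y$ with dense image, and $Y$ carries a unique invariant (Haar) measure $\eta$. Thus items 1(a) and 1(b) of the statement are just this identification, and the real content of part 1 is that the factor map $\pi:X\to Y$ is almost one-to-one. Once this is known, the assertion that the singleton-fibre set $X_0$, and hence $Y_0=\pi(X_0)$, is a dense $G_\del$ follows from minimality together with upper semicontinuity of the fibres, by the same Baire-category argument used for the diameter lemma of Section~2.

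The heart of the matter, and where I expect the main obstacle to be, is the almost-automorphy claim. The key intermediate step is that in a tame minimal system with Abelian acting group the regionally proximal relation coincides with the proximal relation, and is moreover ``thin'': the union of the non-singleton fibres of $\pi$ is meagre. This is exactly where tameness (through the Baire class one, hence fragmented, nature of the elements of $E(X)$) and the commutativity of $\Ga$ are genuinely used, by way of an Ellis-semigroup analysis: one works with a minimal left ideal $I\subset E(X)$ and an idempotent $u\in I$, and runs a Namioka-type joint-continuity argument at a suitably chosen point $x_0$ (a continuity point of $u$ and of enough other elements of $E(X)$). If $x'\neq x_0$ and $\pi(x')=\pi(x_0)$, then $x_0$ and $x'$ are proximal, so $px_0=px'$ for some $p\in I$; chasing this equality through the idempotent $u$ near the continuity point $x_0$, using upper semicontinuity of the fibres, produces a contradiction. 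Hence some fibre is a singleton, and by minimality $X_0$ is a dense $G_\del$.

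Finally, to obtain part 2 I would upgrade ``almost one-to-one'' to ``isomorphic extension.'' Almost automorphy alone does not suffice --- irregular almost one-to-one extensions are not measure isomorphisms --- so here I would invoke the measure-theoretic face of tameness: by the results of Kerr--Li and Glasner--Weiss on tame (``null'') systems, every $\Ga$-invariant probability measure on a tame system yields a measure-preserving system with discrete spectrum, and for a minimal system such a measure must therefore be carried by $X_0$, with disintegration over $\eta$ into the point masses $\del_{\varphi(y)}$, where $\varphi=(\pi|_{X_0})^{-1}$. In particular there is only one invariant measure $\mu$, so $(X,\Ga)$ is uniquely ergodic, $\pi_*\mu=\eta$, and $\pi:(X,\mu,\Ga)\to(Y,\eta,\Ga)$ is a measure-theoretic isomorphism. (Alternatively the last step can be phrased through Proposition~\ref{iff}: the relative product $X\underset{Y}{\times}X$ is again tame, its unique minimal subset is the diagonal, and the discrete-spectrum property of its invariant measures forces it to be uniquely ergodic.) This completes the proof.
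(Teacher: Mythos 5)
You should first note that this paper contains no proof of Theorem~\ref{almost-auto} at all: it is quoted verbatim from \cite[Theorem 5.1]{Gl-str} (which in turn rests on \cite{Huang} and \cite{KL}), so there is no in-paper argument to compare against. What you have written is an attempted reconstruction of the proof from those references, and at the level of tools (the Bourgain--Fremlin--Talagrand dichotomy, Rosenthal compactness and Baire class one elements of the enveloping semigroup, continuity points, upper semicontinuity of fibres, the identification of the maximal equicontinuous factor as a group rotation for Abelian $\Ga$) it points in the right direction for part 1.

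However, the two places where the theorem is actually hard are not covered. For part 1, the existence of a singleton fibre is only gestured at: asserting that two points $x_0\neq x'$ with $\pi(x_0)=\pi(x')$ are proximal already presupposes that the regionally proximal relation coincides with the proximal relation for tame minimal Abelian systems, which is essentially the statement to be proven, and ``chasing this equality through the idempotent $u$ near the continuity point'' is not an argument --- this is exactly where \cite{Huang}, \cite{KL} and \cite{Gl-str} do substantial work with fragmentability and the structure of $E(X)$. For part 2 the proposed deduction fails as written. You conflate tame with null (null implies tame, not conversely, so ``tame (`null')'' results cannot simply be cited), and, more seriously, even granting that every invariant measure of a tame system has discrete spectrum, the inference that such a measure ``must therefore be carried by $X_0$'' is false: the paper's own classification, and \cite[Example 5.1]{DK}, exhibit irregular yet isomorphic almost 1-1 extensions, i.e.\ uniquely ergodic systems with discrete spectrum in which the union of the singleton fibres has measure zero. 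So discrete spectrum plus almost automorphy yields neither $\mu(X_0)=1$ nor, by your route, unique ergodicity or the isomorphism statement; establishing these for tame systems is precisely the content of part 2 in \cite{Gl-str} (with unique ergodicity for amenable actions coming from \cite{KL}). The alternative via Proposition~\ref{iff} has the same gap: for a proximal (in particular almost 1-1) extension the diagonal is indeed the unique minimal subset of the relative product, but unique ergodicity of the relative product does not follow from that together with discrete spectrum of its invariant measures --- it is what needs to be proven.
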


\br

By a theorem of Furstenberg and Kesten \cite{Fur-K}, for an ergodic system
$(Z,\nu,S)$ and a 
measurable map (cocycle) $\rho : Z \to SL(n,\R)$,
such that both $\log^+\|\rho(z)\|$ and $\log^+\|\rho^{-1}(z)\|$ are integrable functions, the
{\em Lyaponov} limit
\begin{equation}\label{Ly}
\La(\rho) = \lim \frac{1}{n+1} \log \|\rho(T^nz)\cdots\rho(Tz)\rho(z)\|
\end{equation}
exists $\nu$ almost everywhere.
In the case where $(Z,S,\nu)$ is a strictly ergodic system and $\La(\rho) = 0$,
it was shown by Furman \cite{Fu} that the limit exists uniformly on $Z$. 
However, in general, even in the case where $(Z,S,\nu)$ is strictly ergodic,
as was shown by Walters \cite{W} and Herman \cite{H}, the convergence need not be uniform.

In the two-dimensional case; i.e. when $\rho$ maps into $SL(2,\R)$, Furman considers the associated skew product $X = Z \underset{\rho}{\times} Y$, where $Y = \mathbb{P}^1$ and then,
using Oseledec' theorem \cite{Os}, he proves the following:

\begin{thm}
Let $(Z,S,\nu)$ be a strictly ergodic system, $\rho : Z \to SL(2,\R)$ a continuous cocycle, and
let $(X,T)= (Z \underset{\rho}{\times} \mathbb{P}^1,T)$ be the corresponding skew-product.
Then there are three possibilities:
\begin{enumerate}
\item[(1)]
$\La(T) : = \La(\rho) = 0$, in which case the convergence in (\ref{Ly}) is uniform
and $(X,T)$ is uniquely ergodic.
\item[(2)]
$\La(\rho) > 0$.
There are on $X$ exactly two ergodic measures $\theta^+$ and $\theta^-$. 
They are graph measures, i.e. carried by
the graphs of two measurable functions $u^\pm : Z \to \mathbb{P}^1$.
\begin{enumerate}
\item[(2a)]
On $(X,T)$ there is a unique minimal set $M$, with 
 $M= \supp \theta^+ = \supp \theta^-$,
 and there is a dense $G_\del$ subset $Z_0 \subset Z$ on which 
\begin{equation*}
\liminf \frac{1}{n+1} \log \|\rho(T^nz)\cdots\rho(Tz)\rho(z)\| < \La(\rho).
\end{equation*}

\item[(2b)]
On $X$ there are exactly two minimal sets $M^+$
and $M^- $, which are the graphs of 
continuous functions $u^\pm : Z \to \mathbb{P}^1$.
Each of these minimal sets supports a unique ergodic measure,
$\theta^+$ and $\theta^-$ with 
$ M^+ =  \supp \theta^+$ and $M^- =  \supp \theta^-$,
and again the convergence in (\ref{Ly}) is uniform.
\end{enumerate}
\end{enumerate}
\end{thm}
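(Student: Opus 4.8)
The plan is to follow Furman's route: apply Oseledec's multiplicative ergodic theorem \cite{Os} to the cocycle both forwards and backwards, invoke his own uniform-convergence result \cite{Fu} to dispose of the zero-exponent case, and then read the $T$-invariant measures off the Oseledec splitting. Write $T$ for the skew-product map on $X = Z\underset{\rho}{\times}\mathbb{P}^1$ and, for $z\in Z$, put $\rho^{(n)}(z) = \rho(S^{n-1}z)\cdots\rho(z)\in SL(2,\R)$, so that $T^n(z,[v]) = (S^nz,[\rho^{(n)}(z)v])$. The two singular values of a matrix in $SL(2,\R)$ are reciprocal, so $\|\rho^{(n)}(z)\| = \|\rho^{(n)}(z)^{-1}\|\ge 1$; hence $\La := \La(\rho)\ge 0$, and by ergodicity of $\nu$ the Lyapunov spectrum is $\{-\La,\La\}$ for $\nu$-a.e.\ $z$. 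Oseledec's theorem then provides, for $\nu$-a.e.\ $z$: either the single exponent $0$ (when $\La = 0$), or, when $\La>0$, a measurable $\rho$-equivariant splitting $\R^2 = E^+(z)\oplus E^-(z)$ into lines with $\frac1n\log\|\rho^{(n)}(z)v\|\to+\La$ for $0\ne v\notin E^-(z)$ and $\to-\La$ for $0\ne v\in E^-(z)$, together with the symmetric statement for the inverse cocycle over $(Z,S^{-1})$ (the roles of $E^+$ and $E^-$ being exchanged). Set $u^\pm(z) = [E^\pm(z)]\in\mathbb{P}^1$: these are measurable $T$-equivariant sections of $\pi:X\to Z$, and the graph measures $\theta^\pm$ supported on $\gr(u^\pm)$ are $T$-invariant and ergodic, being isomorphic copies of $(Z,\nu,S)$.

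For case (1), when $\La=0$, the uniform convergence $\frac1n\log\|\rho^{(n)}(z)\|\to0$ is precisely the theorem of \cite{Fu} --- this is where strict ergodicity of $(Z,S)$ is used, through a uniform subadditive argument. Granting it, let $\theta$ be any ergodic $T$-invariant measure on $X$; it projects to $\nu$ (the unique measure on $Z$), and Birkhoff's ergodic theorem, applied on $(X,\theta,T)$ to the bounded continuous function $(z,[v])\mapsto\log\|\rho(z)\hat v\|$ (with $\hat v$ a unit representative of $[v]$), identifies the fibre exponent $\La(\theta)$ with $\lim_n\frac1n\log\|\rho^{(n)}(z)\hat v\|$ for $\theta$-a.e.\ $(z,[v])$. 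The uniform bound --- together with $\|A\| = \|A^{-1}\|$ on $SL(2,\R)$ --- forces $\La(\theta) = 0$, so there is no measurable equivariant line expanded over one contracted, and the argument of \cite{Fu} then yields unique ergodicity of $(X,T)$.

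For case (2), when $\La>0$, I would first count the ergodic measures. Let $\theta$ be ergodic $T$-invariant on $X$; it projects to $\nu$, and disintegrating $\theta = \int\theta_z\,d\nu(z)$, the function $z\mapsto\theta_z(\{u^-(z)\})$ is measurable and $S$-invariant, hence $\nu$-a.e.\ equal to $0$ or $1$. If it equals $1$, then $\theta = \theta^-$. If it equals $0$, then, since off the single line $u^-(z)$ the pointwise limit above is $+\La$, one gets $\La(\theta) = +\La$; running the Birkhoff computation for $T^{-1}$ now gives $\frac1n\log\|\rho^{(-n)}(z)\hat v\|\to-\La(\theta) = -\La$ for $\theta$-a.e.\ $(z,[v])$, and the backward half of Oseledec forces $[v] = u^+(z)$ $\theta$-a.e., i.e.\ $\theta = \theta^+$. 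So $\theta^\pm$ are the only ergodic invariant measures, carried by the graphs of the measurable maps $u^\pm$, as claimed. The split into (2a) and (2b) is then governed by whether the Oseledec splitting admits a continuous ($T$-equivariant) version --- equivalently, whether both $u^\pm$ do --- which by the standard dichotomy theory is equivalent both to the cocycle admitting an exponential dichotomy and to uniform convergence in \eqref{Ly}. In case (2b) one checks that $M^\pm := \gr(u^\pm)$ are two disjoint closed invariant topological copies of $(Z,S)$, hence minimal and uniquely ergodic with $M^\pm = \supp\theta^\pm$, and that uniform hyperbolicity collapses the fibre of any minimal subset of $X$ onto $\{u^+(z)\}$ or $\{u^-(z)\}$, so these are the only minimal sets. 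In case (2a), convergence in \eqref{Ly} fails to be uniform; the deficiency set $Z_0 = \{z:\liminf_n\frac1n\log\|\rho^{(n)}(z)\|<\La\}$ is a $G_\del$ --- it is the increasing union over $m$ of $\bigcap_N\bigcup_{n\ge N}\{z:\frac1n\log\|\rho^{(n)}(z)\|<\La-\tfrac1m\}$, and minimality of $(Z,S)$ makes each open set in this intersection dense once non-uniformity gives $Z_0\ne\emptyset$ --- hence $Z_0$ is residual; and one shows $\supp\theta^+ = \supp\theta^-$ is the unique minimal set, the discontinuity of $u^\pm$ being spread by minimality of the base to force $\overline{\gr u^+} = \overline{\gr u^-}$ and to force this common closed invariant set inside every closed invariant subset of $X$.

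\textbf{The main obstacle.} The genuinely hard inputs are (a) the uniform convergence for $\La = 0$, namely Furman's theorem \cite{Fu} on subadditive cocycles over strictly ergodic systems, which is also what carries the unique ergodicity in part (1); and (b) the topology of the non-uniformly hyperbolic regime (2a): that the two a priori distinct supports $\supp\theta^+$ and $\supp\theta^-$ actually coincide and form the unique minimal set, and that $Z_0$ is a genuine dense $G_\del$ rather than empty --- the very phenomenon realized by the examples of Walters \cite{W} and Herman \cite{H}. By comparison, the count of ergodic measures, via Oseledec run both ways, is routine.
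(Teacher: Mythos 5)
First, the frame of comparison: the paper does not prove this theorem at all --- it is quoted from Furman \cite{Fu} (who derives it via Oseledec's theorem \cite{Os}), so there is no in-paper argument to measure your sketch against. Your outline does follow the route the paper attributes to Furman, and the part you label routine is handled correctly: any ergodic $\theta$ on $X$ projects to $\nu$, the Birkhoff averages of the continuous function $(z,[v])\mapsto\log\bigl(\|\rho(z)v\|/\|v\|\bigr)$ telescope to $\frac1n\log\bigl(\|\rho^{(n)}(z)v\|/\|v\|\bigr)$, and Oseledec run forwards and backwards pins the fibre measure to $u^-$ or $u^+$, giving exactly the two graph measures in case (2). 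But as a proof the proposal is largely a reduction to the results being quoted: uniform convergence when $\La(\rho)=0$, the equivalence ``uniform convergence $\iff$ continuous splitting / exponential dichotomy'' which separates (2a) from (2b), and, in the non-uniform case, the facts that $\supp\theta^+=\supp\theta^-$ is the unique minimal set and that $Z_0$ is nonempty (hence, by invariance of the $\liminf$ and minimality of the base, residual) --- these are precisely the substance of Furman's theorem, and you defer all of them to \cite{Fu}, to ``standard dichotomy theory'', or to assertion. Flagging them as ``the main obstacle'' is honest, but it means the theorem is not actually proved here, any more than it is in the paper.

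There is also one step that would fail as written: your case (1). From $\La(\rho)=0$ you correctly get that every invariant measure of $(X,T)$ has zero fibre exponent, but the inference ``no expanded or contracted measurable line, and the argument of \cite{Fu} then yields unique ergodicity of $(X,T)$'' is a non sequitur. Zero exponents plus uniform convergence do not by themselves force unique ergodicity of the projective extension: for the constant cocycle $\rho\equiv \Id$ one has $\La=0$ and trivially uniform convergence, yet $T=S\times \Id$ on $Z\times\mathbb{P}^1$ admits $\nu\times m$ for every probability measure $m$ on $\mathbb{P}^1$. So whatever is true in case (1) --- and indeed the unique ergodicity clause in the statement as the paper paraphrases it --- requires Furman's precise formulation and analysis, not a deduction from the vanishing of the exponent; your sketch supplies no mechanism that would distinguish, say, an identity cocycle from one whose projective extension really is uniquely ergodic. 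Cases (2a)/(2b), by contrast, are only incomplete (deferred), not wrong: the counting of ergodic measures is fine, and the $G_\del$ structure of $Z_0$ and the support statements are plausible but need the dichotomy machinery you invoke without proof, which is exactly where the examples of Walters \cite{W} and Herman \cite{H} live.
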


Examples of the type (2a) were given by Walters \cite{W} and Herman \cite{H}.
Furman also shows that a cocycle $\rho$ of type (2b) is continuously diagonalizable and
continuously cohomologous to
a function whose values eventually lie in the set of positive matrices.
For more details see \cite{Fu}.
It follows from this theorem that the strictly ergodic systems on $Z \times \mathbb{P}^1$
whose existence is shown in \cite[Theorem 2]{GW-79} and in Theorem \ref{main} above
are all of the type $\La(T) =0$.

\br


\begin{thebibliography}{MMMMM}


\bibitem{Ak-98}
Ethan Akin, 
{\em Enveloping linear maps\/}, in: Topological dynamics
and applications, Contemporary Mathematics {\bfseries 215}, a
volume in honor of R.~Ellis, 1998, pp. 121--131.

\bibitem{Aus}
Joseph Auslander,
{\em Minimal flows and their extension\/},
North-Holland, 1988.


\bibitem{DK}
T. Downarowicz, S. Kasjan, \emph{Odometers and Toeplitz subshifts revisited in the context
of Sarnak's conjecture}, preprint, arXiv:1502.02307.

\bibitem{Ellis-93}
Robert Ellis, 
{\it The enveloping semigroup of projective flows},
Ergod. Th. Dynam. Sys. {\bf 13} (1993), 635--660.

\bibitem{Fur-81}
Harry Furstenberg,
{\em Recurrence in ergodic theory and combinatorial number
theory\/},
Princeton university press,  Princeton, N.J., 1981.

\bibitem{Fur-K}
Harry Furstenberg and Harry Kesten,
Products of random matrices,
Ann. Math. Stat. {\bf 31}, (1960), 457--489.

\bibitem{Fu}
Alex Furman, {\em On the multiplicative ergodic theorem for uniquely ergodic systems},
Ann. Inst. H. PoincarŽ Probab. Statist. {\bf 33},  (1997),  797--815.

\bibitem{GR} Felipe Garcia-Ramos,
{\em Weak forms of topological and measure theoretical equicontinuity: relationships with discrete spectrum and sequence entropy}, Ergod. Th. and Dynam. Sys. to appear.

\bibitem{G-03}
Eli Glasner,
{\em Ergodic theory via joinings\/},
AMS, Surveys and Monographs, {\bf{101}}, 2003.

\bibitem{Gl-str} 
Eli Glasner, {\it The structure of tame minimal
dynamical systems}, Ergod. Th. and Dynam. Sys. {\bf 27}, (2007),
1819--1837.

\bibitem{GM-14}
Eli  Glasner and Michael Megrelishvili, 
{\em Representations of dynamical systems on Banach spaces}, in
Recent progress in general topology. III, 399--470, Atlantis Press, Paris, 2014.


\bibitem{GW-79}
Eli Glasner and Benjamin Weiss,
{\em On the construction of minimal skew-products\/},
Israel J.\ of Math.\
{\bfseries 34}, (1979), 321--336.

\bibitem{GW-06}
Eli Glasner and Benjamin Weiss,
{\em On the interplay between measurable and topological dynamics}. 
Handbook of dynamical systems. Vol. 1B, 597Ð--648, Elsevier B. V., Amsterdam, 2006.

\bibitem{H}
M. R. Herman,
{\em Construction d'un diffŽomorphisme minimal d'entropie topologique non nulle}. (French) 
[Construction of a minimal diffeomorphism with nonzero topological entropy],
Ergod. Th. \& Dynam. Sys. {\bf 1} (1981),  65--76.



\bibitem{Huang}
Wen Huang, {\em Tame systems and scrambled pairs under an abelian
group action\/}, Ergod. Th. Dynam. Sys. {\bf 26} (2006),
1549--1567.

\bibitem{HLY}
W. Huang, P. Lu, and X. Ye. {\em Measure-theoretical sensitivity and
equicontinuity\/}, Israel Journal of Mathematics {\bf 183} (2011), 233--283. 

\bibitem{KL} David Kerr and Hanfeng Li,
{\it Independence in topological and $C^*$-dynamics}, Math. Ann.
{\bf 338},  (2007), 869--926.

\bibitem{Leh}
  E. Lehrer
  \emph{Topological Mixing and Uniquely Ergodic Systems},
  Israel J. of Math. {\bf 57} (1987), 239--255

\bibitem{LTY}
J. Li, S. Tu, X. Ye,
{\em Mean equicontinuity and mean sensitivity}, 
arXiv:1312.7663, 
to appear in Ergodic Theory and Dynamical Systems.

\bibitem{Os}
V. I. Oseledec, 
{\em A multiplicative ergodic theorem.
Lyaponov characteristic numbers for dynamical systems}
Trans. Moscow Mat. Soc., {\bf 19}, (1968), 197--231.

\bibitem{Ros}
 A. Rosenthal, 
 {\em  Strictly ergodic models for noninvertible transformations}. 
 Israel J. Math. {\bf 64}, (1988),  57--72.

\bibitem{Sarnak}
  P. Sarnak, \emph{Three Lectures on the M\"obius Function
Randomness and Dynamics} http://www.math.ias.edu/files/wam/2011/PSMobius.pdf

\bibitem{W}
Peter Walters, 
{\em Unique ergodicity and random matrix products}. Lyapunov exponents (Bremen, 1984), 37--55, 
Lecture Notes in Math., {\bf 1186}, Springer, Berlin, 1986.


\bibitem{We-85}
Benjamin Weiss,
{\em Strictly ergodic models for dynamical systems\/},
Bull.\ of the Amer.\ Math. Soc.\   {\bfseries 13},
(1985), 143--146.

\bibitem{V}
W. A. Veech, 
{\em The equicontinuous structure relation for minimal abelian transformation groups}, 
Amer. J. Math. {\bf 90},  (1968), 723--732.


\end{thebibliography}
\end{document}